\newtheorem{them}{Theorem}[section]
\newtheorem{pro}{Proposition}[section]
\newtheorem{lemma}{Lemma}[section]
\newtheorem{cor}{Corollary}[section]
\numberwithin{equation}{section}
\begin{document}

\title{
A note on the Cauchy problem for $-D_0^2+2x_1D_0D_2+D_1^2+x_1^3D_2^2+\sum_{j=0}^2b_jD_j$}

\author{Tatsuo Nishitani\footnote{Department of Mathematics, Osaka University:  
nishitani@math.sci.osaka-u.ac.jp
}}

\date{}
\maketitle

\def\ep{\epsilon}
\def\dif{\partial}
\def\al{\alpha}
\def\be{\beta}
\def\ga{\gamma}
\def\om{\omega}
\def\lam{\lambda}
\def\Lam{\Lambda}
\def\de{\delta}
\def\tila{{\tilde \lambda}}
\def\varep{\varepsilon}
\def\R{{\mathbb R}}
\def\N{{\mathbb N}}
\def\C{{\mathbb C}}
\def\Q{{\mathbb Q}}
\def\Ga{\Gamma}
\def\La{\Lambda}
\def\lr#1{\langle{#1}\rangle_{\gamma }}
\def\mD{\lr{ D}_{\mu}}
\def\xim{\lr{\xi}_{\mu}}
\def\co{{\mathcal C}}
\def\op#1{{\rm op}({#1})}
\def\olr#1{\langle{#1}\rangle}
\def\bg{{\bar g}}

\begin{abstract}
In this note, we improve a previously proven non-solvability result
of the Cauchy problem in the Gevrey class for a homogeneous second-order differential operator mentioned in the title.  We prove that the Cauchy problem for this operator is not locally solvable at the origin for any lower order term in the Gevrey class of order greater than $5$, lowering the previously obtained Gevrey order $6$. 
\end{abstract}

\section{Introduction}	

In \cite{Ni:Siena} considering a second order operator in $\R^{1+2}$
 \begin{equation}
 \label{eq:model}
 P_{mod}=-D_0^2+2x_1D_0D_2+D_1^2+x_1^3D_2^2,\quad x=(x_0, x')=(x_0, x_1, x_2)
 \end{equation}
 we have proved
\begin{them}
\label{thm:main}{\rm(\cite[Theorem 1.1]{Ni:Siena})}
The Cauchy problem for $P_{mod}+\sum_{j=0}^2b_jD_j$ is not locally solvable at the origin in the Gevrey class of order $s$ for any $b_0, b_1, b_2\in \C$ if $s>6$. In particular the Cauchy problem for $P_{mod}$ is $C^{\infty}$ ill-posed near the origin for any $b_0, b_1, b_2\in \C$.
\end{them}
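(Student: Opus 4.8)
The plan is to establish non-solvability through the standard duality criterion: if the Cauchy problem for $P:=P_{mod}+\sum_{j=0}^2 b_jD_j$ were locally solvable at the origin in the Gevrey class of order $s$, then its transpose ${}^tP$, which is of the same form (with $b_0,b_1,b_2$ replaced by other constants, hence covered by the statement), would satisfy an a priori estimate with Gevrey-type weights on a small neighborhood. I would therefore violate this estimate by producing a family of highly concentrated approximate null solutions $u_\lambda$, indexed by a large parameter $\lambda\to\infty$, for which $Pu_\lambda$ is super-exponentially small while $u_\lambda$ and its Cauchy data stay controlled. Since the coefficients of $P_{mod}$ depend neither on $x_0$ nor on $x_2$, it is natural to seek $u_\lambda$ of plane-wave form $u_\lambda=e^{i\lambda x_2+i\sigma(\lambda)x_0}\phi_\lambda(x_1)$, so that the instability is measured by $\mathrm{Im}\,\sigma(\lambda)$: a growth like $e^{c\lambda^{1/\theta}}$ over a fixed $x_0$-interval, at dominant frequency $\lambda$ in $x_2$, forces ill-posedness precisely for $s>\theta$, and it therefore suffices to achieve $|\mathrm{Im}\,\sigma(\lambda)|\gtrsim\lambda^{1/6}$.

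The reduction rests on the factorization
\[
P_{mod}=-(D_0-x_1D_2)^2+D_1^2+x_1^2(1+x_1)D_2^2,
\]
which I would verify by direct expansion, using that $x_1$ commutes with $D_0$ and $D_2$. Replacing $D_2$ by $\lambda$ and $D_0$ by $\sigma$, the quadratic piece $-(\sigma-\lambda x_1)^2$ cancels the contribution $\lambda^2x_1^2$ exactly, so that $\phi_\lambda$ must solve the one–dimensional problem
\[
\big(D_1^2+\lambda^2x_1^3+2\sigma\lambda x_1-\sigma^2+b_1D_1+b_0\sigma+b_2\lambda\big)\phi_\lambda=0 .
\]
The whole question is thereby reduced to the spectral analysis of a Schr\"odinger operator with a \emph{cubic} potential $\lambda^2x_1^3+2\sigma\lambda x_1$. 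For suitable $\sigma$ this potential has a metastable well separated from the escape region $x_1\to-\infty$ by a barrier, and the relevant objects are its complex resonances; a resonance with $|\mathrm{Im}\,\sigma|\to\infty$ is exactly the growing mode sought.

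To extract the rate I would pass to the semiclassical variable adapted to the cubic, $x_1=\lambda^{-2/5}y$, which balances $D_1^2$ against $\lambda^2x_1^3$ at the common size $\lambda^{4/5}$ and, after division by $\lambda^{4/5}$, turns the equation into a perturbation of the parameter-free model $-\phi''+(y^3+i\beta y)\phi=0$, where $\beta$ is proportional to $\sigma\lambda^{-1/5}$ and the energy term $\sigma^2\lambda^{-4/5}$ is negligible. A complex WKB/turning-point analysis of this cubic model, with a decaying condition as $y\to+\infty$ and an outgoing condition as $y\to-\infty$, selects a discrete set of values of $\beta$ and yields, in the unperturbed case $b_j=0$, a resonance with $\mathrm{Im}\,\sigma\sim\lambda^{1/5}$; this is in particular $\gtrsim\lambda^{1/6}$. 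The only genuinely dangerous lower-order term is $b_2D_2\mapsto b_2\lambda$, which appears as a large energy shift; I would absorb it by allowing a real part $\mathrm{Re}\,\sigma\sim\lambda^{1/2}$ with $\sigma^2$ compensating $b_2\lambda$, and then check that this shift changes neither the leading cubic balance nor the order of $\mathrm{Im}\,\sigma$, while $b_0\sigma$ and $b_1D_1$ remain lower-order perturbations in this scaling. Once $\sigma(\lambda)$ and $\phi_\lambda$ are produced with an error that is $O(e^{-c\lambda^{1/6}})$ in the relevant norms, the mode grows like $e^{c\lambda^{1/6}|x_0|}$, contradicting every Gevrey-$s$ a priori estimate with $s>6$.

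I expect the main obstacle to be the rigorous resonance construction together with its \emph{uniformity in the lower-order coefficients} $b_0,b_1,b_2$. The hard points are: carrying out the complex connection across the turning points of the cubic so as to obtain a genuine quasimode with super-exponentially small error, rather than a merely formal expansion; controlling the non–self-adjointness coming from $\mathrm{Im}\,\sigma\neq0$; and verifying that the energy shift forced by $b_2\lambda$ does not lower the growth exponent beneath the value needed for the threshold, which is precisely the place where the attainable Gevrey order is decided. A secondary but necessary step is to make the passage from the growing mode to the failure of local solvability fully rigorous, namely to record the functional-analytic a priori estimate implied by solvability (in the spirit of the non-solvability arguments of H\"ormander and Mizohata) and to confirm that the constructed family, together with its controlled Cauchy data on $\{x_0=0\}$, indeed contradicts it.
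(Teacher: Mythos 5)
Your overall architecture --- separation of variables $u_\lambda=e^{i\lambda x_2+i\sigma(\lambda)x_0}\phi_\lambda(x_1)$, reduction to a second-order ODE with cubic potential, a connection/quantization condition selecting $\sigma(\lambda)$ with ${\mathsf{Im}}\,\sigma$ of a definite sign and size, and the a priori estimate plus Holmgren to convert growth into non-solvability --- is the same as the one used here (the statement itself is quoted from an earlier paper; the present paper proves the stronger $s>5$ version by exactly this scheme, via Lemma \ref{lem:b2:zero} for $b_2=0$ and Proposition \ref{pro:main} for $b_2\neq0$). Your factorization of $P_{mod}$ is correct, and your scaling $x_1=\lambda^{-2/5}y$ at $x_2$-frequency $\lambda$ coincides with the paper's $x=\lambda^2x_1$ at frequency $\lambda^5$.

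The genuine gap is your treatment of $b_2\lambda$. You propose ${\mathsf{Re}}\,\sigma\sim\lambda^{1/2}$ so that $\sigma^2$ cancels $b_2\lambda$, and assert that this changes neither the leading cubic balance nor the order of ${\mathsf{Im}}\,\sigma$. That assertion fails: after your rescaling and division by $\lambda^{4/5}$ the linear term becomes $2\sigma\lambda^{-1/5}y$, which for $\sigma\sim\lambda^{1/2}$ is of size $\lambda^{3/10}\to\infty$, so the equation is no longer a perturbation of $-\phi''+(y^3+i\beta y)\phi=0$; on the scale where $\phi''$ balances the now-dominant linear term the problem degenerates to an Airy equation and the two-turning-point structure that produces the quantization is lost. (Were your ansatz to produce ${\mathsf{Im}}\,\sigma\sim\lambda^{1/2}$ it would in fact contradict the known Gevrey well-posedness below order $3$ for this class.) The correct mechanism, which the paper implements, is different: writing the potential as $x^3+a_2x+a_3$ with $a_2=2\xi_0$ and $a_3\approx b_2\lambda$, the \emph{leading term of the quantization condition} is $a_3\approx\frac{2}{27}\omega^3\beta^{-12/5}$ with $a_2=-\frac{\omega^2}{3}\beta^{-8/5}$, i.e.\ precisely the vanishing of the discriminant $4a_2^3+27a_3^2$ (a double turning point); balancing this against $b_2\lambda$ forces $a_2\sim\lambda^{2/3}$ in the paper's units, that is $\sigma\sim\lambda^{1/3}$ with ${\mathsf{Im}}\,\sigma<0$ in yours. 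This still beats the threshold $\lambda^{1/6}$ (indeed it gives $s>3$), but it is not reached by your scaling, and it is exactly the place where the attainable Gevrey index is decided. Relatedly, the coalescing turning points created by this balance are why the paper does not use complex WKB at all: it solves the Stokes condition $C_0(a_2,a_3)=0$ exactly, via Sibuya's connection coefficients and the analytic continuation of the eigenvalues of $p^2+x^2+i\beta x^3$ (Caliceti--Graffi--Maioli). Until the $b_2\neq0$ regime is set up with the correct scaling and a rigorous substitute for the turning-point analysis, the claim ``for any $b_0,b_1,b_2$'' is not established.
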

Recall that the Gevrey class of order $s$, denoted by $\gamma^{(s)}(\R^n)$, is the set of all $f(x)\in C^{\infty}(\R^n)$ such that for any compact set $K\subset \R^n$, there exist $C>0$, $h>0$ such that
\begin{equation}
\label{eq:gevrey}
|\dif_x^{\al}f(x)|\leq Ch^{|\al|}|\al|!^s,\quad x\in K,\;\;\al\in \N^{n}.
\end{equation}
We say that the Cauchy problem for $P=P_{mod}+\sum_{j=0}^2b_jD_j$ is locally solvable in $\gamma^{(s)}$ at the origin if for any $\Phi=(u_0, u_1)\in (\gamma^{(s)}(\R^2))^2$ there exists  a neighborhood $U_{\Phi}$, may depend on $\Phi$,  of the origin such that the Cauchy problem 
\begin{equation}
\label{eq:CP}
\begin{cases}
Pu=0\quad \text{in}\;\;U_{\Phi},\\
D_0^ju(0, x')=u_j(x'),\quad x'\in U_{\Phi}\cap\{x_0=0\},\;\;j=0, 1
\end{cases}
\end{equation}
has a solution $u(x)\in C^2(U_{\Phi})$.
In this note we remark that one can improve Theorem \ref{thm:main} so that
\begin{them}
\label{thm:main:bis}
The Cauchy problem for $P_{mod}+\sum_{j=0}^2b_jD_j$ is not locally solvable in $\gamma^{(s)}$ at the origin  for any $b_0, b_1, b_2\in \C$ if $s>5$. 
\end{them}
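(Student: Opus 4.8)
The plan is to argue by contradiction, producing a single $\gamma^{(s)}$ Cauchy datum whose only candidate solutions must contain exponentially amplified, highly concentrated modes and therefore cannot be $C^2$ near the origin. I would first rewrite
\[
P_{mod}=-(D_0-x_1D_2)^2+D_1^2+x_1^2(1+x_1)D_2^2 ,
\]
so that the characteristic roots are $\xi_0=x_1\xi_2\pm\sqrt{\xi_1^2+x_1^2(1+x_1)\xi_2^2}$ and the characteristics are double exactly on $\{\xi_1=0,\ x_1=0\}$ (for $\xi_2>0$); a short computation shows the fundamental matrix there is nilpotent, so $P$ is not effectively hyperbolic and solvability is decided by the lower order structure. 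Assuming the Cauchy problem is solvable in $\gamma^{(s)}$ at the origin, a Baire-category / closed-graph argument yields a fixed neighborhood $U$ and an a priori bound of any $C^2$ solution on a compact subset of $U$ by finitely many $\gamma^{(s)}$-seminorms of its data; the whole point is to violate this bound when $s>5$.

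Next I would take the partial Fourier transform in $x_2$, turning $D_2$ into a large parameter $\lambda>0$, and rescale at the oscillator scale $x_0=\lambda^{-1/2}s$, $x_1=\lambda^{-1/2}t$. After division by $\lambda$ the principal part becomes the $\lambda$-independent model $\mathcal L_0=-(D_s-t)^2+(D_t^2+t^2)$, the potential $x_1^3D_2^2$ contributes the perturbation $\lambda^{-1/2}t^3$, and among the lower order terms $b_2D_2$ survives at order one (while $b_0D_0,\,b_1D_1$ enter at order $\lambda^{-1/2}$). Feeding in the plane-wave ansatz $e^{\mu s}\psi(t)$ collapses $\mathcal L_0$ to the transverse operator $D_t^2-2i\mu t+\mu^2$, and the equation $Pu=0$ reduces to the one-dimensional, non-self-adjoint, cubic problem $-\psi''+(\lambda^{-1/2}t^3-2i\mu t+\mu^2+b_2+\cdots)\psi=0$.

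The decisive point — and, I expect, the origin of the improvement from $6$ to $5$ — is that the cubic term is balanced not at the oscillator scale but at $t\sim\lambda^{1/10}$. Putting $t=\lambda^{1/10}\theta$ reduces the transverse equation, to leading order, to the $\lambda$-free complex resonance problem $-\psi''+(\theta^3-\nu\theta)\psi=0$ with $\nu=2i\mu\lambda^{3/10}$. The cubic potential $\theta^3-\nu\theta$ admits outgoing (Gamow) resonances at a discrete set of genuinely complex $\nu$; selecting one with $\operatorname{Im}\nu$ of the appropriate sign forces $\operatorname{Re}\mu\sim\lambda^{-3/10}$, hence an amplification $e^{\operatorname{Re}(\mu)\lambda^{1/2}x_0}=e^{c\lambda^{1/5}x_0}$, $c>0$, at fixed $x_0>0$. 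I would then convert this formal resonance into an honest approximate solution $u_\lambda$: take $e^{i\lambda x_2}$ times the growing exponential times the transverse resonance profile $\psi$ (rescaled by $\lambda^{1/10}$), cut off smoothly where the Gamow profile diverges and in $x$, and remove the residual by a finite expansion in $\lambda^{-1/10}$ so that $Pu_\lambda$ is exponentially smaller than the amplification.

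Finally I would superpose these single-frequency quasimodes against a weight $\chi(\lambda)\sim e^{-\varepsilon\lambda^{1/s}}$, which makes the resulting Cauchy data genuinely of class $\gamma^{(s)}$ while the corresponding solution carries the factor $\int e^{c\lambda^{1/5}x_0}\chi(\lambda)\,d\lambda$. For $s>5$ one has $1/s<1/5$, the integral diverges at every $x_0\neq0$, and no $C^2$ solution can exist on any neighborhood of the origin, contradicting the a priori bound and proving Theorem \ref{thm:main:bis}. The main obstacle is to carry out the resonance construction rigorously and \emph{uniformly in} $b_0,b_1,b_2$: the reduced operator is non-self-adjoint, the Gamow profile must be truncated without destroying the $\lambda^{1/5}$-amplification, all errors from rescaling and cut-offs must be shown to be of strictly lower exponential order, and the order-one coefficient $b_2$ must be absorbed into the reduced spectral problem with the complex resonance — and thus the exponent $1/5$ — shown to persist for every choice of the lower order coefficients.
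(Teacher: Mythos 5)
Your overall strategy --- high-frequency solutions oscillating like $e^{i\lambda x_2}$, exponentially amplified in $x_0$, played against the a priori estimate that $\gamma^{(s)}$ local solvability forces via a Banach--Steinhaus argument, with Holmgren used to localize --- is the same as the paper's, and your exponent count ($e^{c\lambda^{1/5}x_0}$ against data of size $e^{c\lambda^{1/s}}$, contradiction iff $s>5$) is the correct one \emph{when $b_2=0$}. But there is a genuine gap at the decisive step. After your rescalings ($x_1=\lambda^{-1/2}t$, $t=\lambda^{1/10}\theta$, $\mu\sim\lambda^{-3/10}$) the transverse equation reads
\begin{equation*}
-\psi_{\theta\theta}+\bigl(\theta^3-\nu\theta+\lambda^{1/5}(b_2+\mu^2)+\cdots\bigr)\psi=0,\qquad \nu=2i\mu\lambda^{3/10},
\end{equation*}
so for $b_2\neq0$ the constant term $b_2\lambda^{1/5}$ blows up and the problem does \emph{not} reduce to the $\lambda$-free resonance equation $-\psi''+(\theta^3-\nu\theta)\psi=0$ with $\nu=O(1)$. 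This is not a technicality that can be ``absorbed'': to balance $b_2\lambda^{1/5}$ against $\theta^3$ and $\nu\theta$ at the turning point one is forced to take $\nu\sim\lambda^{2/15}$, i.e.\ $\mu\sim\lambda^{-1/6}$, which changes the amplification to $e^{c\lambda^{1/3}x_0}$ and the threshold to $s>3$. That is exactly how the paper proceeds: it splits into the case $b_2=0$, disposed of by quoting the known result (Lemma \ref{lem:b2:zero}, giving $s>5$), and the case $b_2\neq0$, for which it proves the stronger Proposition \ref{pro:main} ($s>3$). Your single unified scaling, as written, proves neither case.

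Second, the analytic heart of the argument --- the existence of a \emph{genuinely complex} resonance with the correct sign of the imaginary part, equivalently a zero of Sibuya's Stokes coefficient $C_0(2\xi_0,a_3)$ at a point with ${\mathsf{Im}}\,\xi_0<0$ --- is asserted, not proved. In the paper this is obtained indirectly, by relating $C_0=0$ to the eigenvalue problem for the non-self-adjoint cubic oscillator $p^2+x^2+i\beta x^3$ and invoking the Caliceti--Graffi--Maioli analytic continuation of its eigenvalues into the sector $|\arg\beta|<5\pi/8$ (Proposition \ref{pro:Caliceti_1} and Lemma \ref{lem:a1}); some such input is indispensable and cannot be waved through. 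Finally, your plan to use cut-off quasimodes and then superpose them gives no direct control of the \emph{actual} solution with those data: knowing that $Pu_\lambda$ is exponentially small does not by itself bound $u-u_\lambda$ without an additional duality or uniqueness argument. The paper avoids this entirely by taking exact solutions $U_\lambda$ built from Sibuya's subdominant solution ${\mathcal Y}_0$ and using Holmgren's theorem (Proposition \ref{pro:Holmgren}) to conclude $u_\lambda=U_\lambda$ on a fixed small set, after which the lower bound of Lemma \ref{lem:X:1:2} contradicts \eqref{eq:G:hyoka} directly. You would need either to pass to exact solutions as well, or to supply the missing duality step.
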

$P_{mod}$ plays a special role in the well-posedness of the Cauchy problem for differential operators with non-effectively hyperbolic characteristics. Before explaining it we give a short introduction to the general context.  Let $P$ be a differential operator of order $m$ with principal symbol $p(x, \xi)$.  At a singular point $\rho$ of $p=0$, the Hamilton map $F_p(\rho)$ is defined as the linearization at $\rho$ of the Hamilton vector field $H_p$. The first fundamental result is
\begin{them}
\label{thm:IPH}{\rm(\cite{IP}, \cite{Ho1})} Let $\rho$ be a singular point of $p=0$  and assume that $F_p(\rho)$ has no non-zero real  eigenvalues. If the Cauchy problem for $P$ is $C^{\infty}$ well posed it is necessary that
\begin{equation}
\label{eq:IPH}
{\mathsf{Im}}P_{sub}(\rho)=0,\quad \big|P_{sub}(\rho)\big|\leq {\rm Tr^+}F_p(\rho)/2
\end{equation}
where ${\rm Tr^+}F_p(\rho)=\sum|\mu_j|$ and $\mu_j$ are the eigenvalues of $F_p(\rho)$, counted with multiplicity, and $P_{sub}$ is the subprincipal symbol of $P$. 
\end{them}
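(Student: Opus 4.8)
The plan is to derive condition (\ref{eq:IPH}) as a necessary consequence of an a priori energy inequality that $C^\infty$ well-posedness forces, by testing that inequality against a family of approximate solutions concentrated at $\rho$ and adapted to the symplectic geometry of the double characteristic. First I would translate well-posedness into quantitative form: using existence, uniqueness and the closed graph theorem — together with the two-sidedness of $C^\infty$ well-posedness available under the hypothesis that $F_p(\rho)$ has no non-zero real eigenvalues — one obtains an estimate of the type $\|u\|_{(N)} \le C\,\|{}^tP u\|_{(N')}$ with a fixed loss of derivatives, valid for $u$ microlocally supported near $\rho$. All that then remains is to exhibit, for each admissible lower-order configuration, a sequence $u_\lambda$ whose behaviour as $\lambda\to\infty$ is compatible with this inequality precisely when (\ref{eq:IPH}) holds, and incompatible otherwise.

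Next I would localize and rescale around $\rho$. After a homogeneous symplectic change of variables I may assume $\rho=(0;0',1)$ with the transverse double point at the origin; applying the parabolic dilation $x_j\mapsto \lambda^{-1/2}x_j$, $\xi_j\mapsto \lambda^{1/2}\xi_j$ in the transverse directions zooms in at scale $\lambda$. Under this scaling the principal symbol is replaced, to leading order, by its Hessian at $\rho$ — the quadratic form whose Hamilton matrix is $F_p(\rho)$ — while the subprincipal symbol survives at the next order as the constant $P_{sub}(\rho)$. Since $F_p(\rho)$ has no non-zero real eigenvalues, its non-zero spectrum is $\{\pm i\nu_j\}$ with $\nu_j>0$, and a real linear symplectic transformation — implemented by a metaplectic operator, which preserves the subprincipal symbol — brings this quadratic form to the canonical oscillator normal form $\sum_j \nu_j(x_j^2+\xi_j^2)$ on the non-degenerate directions, leaving the kernel of $F_p(\rho)$ as free transport directions.

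The limiting model operator then decouples into propagation in the hyperbolic direction and a transverse part $\sum_j \nu_j(D_j^2+x_j^2)+P_{sub}(\rho)$. This transverse operator is a sum of harmonic oscillators whose lowest eigenvalue is $\sum_j \nu_j={\rm Tr}^+F_p(\rho)/2$, attained on the Hermite ground state; taking transverse profiles from the Hermite basis and propagating them produces the concentrating family $u_\lambda$. Feeding $u_\lambda$ into the inequality from the first step, one finds that a bounded solution can exist only when the effective zeroth-order term is real and of the correct sign in the relevant mode: if $\mathsf{Im}\,P_{sub}(\rho)\ne 0$ the model forces exponential growth $e^{c\lambda x_0}$ in one time direction, whereas if $\mathsf{Im}\,P_{sub}(\rho)=0$ but $|P_{sub}(\rho)|>{\rm Tr}^+F_p(\rho)/2$ then the Hermite ground mode makes the effective potential negative and again yields exponential growth. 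In either case $\|u_\lambda\|$ outgrows any fixed power of $\lambda$ times $\|{}^tP u_\lambda\|$, contradicting the estimate, so (\ref{eq:IPH}) is necessary.

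I expect the main obstacle to lie in two places. The first is the passage from abstract well-posedness to a usable a priori inequality with controlled loss, which requires care with the function spaces and with the two-sided nature of the estimate. The second, and more delicate, is the uniform symplectic reduction of $F_p(\rho)$ together with correct tracking of the invariance of the subprincipal symbol through the metaplectic and Fourier-integral conjugation, especially in the presence of a non-trivial kernel — and possible Jordan structure — of $F_p(\rho)$; the bookkeeping of remainder terms in the parabolic scaling, so that the model computation genuinely governs the growth of $u_\lambda$, is the technical heart of the argument.
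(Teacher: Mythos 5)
You should first note that the paper contains no proof of this theorem: it is quoted as background from Ivrii--Petkov \cite{IP} and H\"ormander \cite{Ho1}, so the only possible comparison is with those classical proofs. Against that benchmark, your overall architecture is the right one: converting $C^{\infty}$ well-posedness into a two-sided a priori estimate with finite loss of derivatives via uniqueness and the closed graph theorem (the Lax--Mizohata device, the same one this paper invokes in Lemma \ref{lem:futosiki}), then testing it on a family of asymptotic solutions concentrated at $\rho$ under the parabolic scaling, with the Hessian of $p$ at $\rho$ and the constant $P_{sub}(\rho)$ appearing at the same order in the rescaled model. In the case where $F_p(\rho)$ is semisimple on its imaginary spectrum and vanishes elsewhere, your Hermite ground-state computation does produce the threshold $\sum_j\nu_j=\mathrm{Tr}^+F_p(\rho)/2$ and the two violations (exponential growth when $\mathsf{Im}\,P_{sub}(\rho)\neq 0$, or when $P_{sub}(\rho)$ is real with $|P_{sub}(\rho)|>\mathrm{Tr}^+F_p(\rho)/2$, testing both time directions for the modulus bound).

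The genuine gap is your normal-form step. The hypothesis only excludes non-zero \emph{real} eigenvalues; it does not make $F_p(\rho)$ semisimple, and for hyperbolic quadratic forms $F_p(\rho)$ generically carries a nilpotent part, with Jordan blocks of size up to $3$ at the eigenvalue $0$ --- exactly the situation $\mathrm{Ker}\,F_p^2(\rho)\cap\mathrm{Im}\,F_p^2(\rho)\neq\{0\}$ that this paper calls spectral type $2$. No real linear symplectic map reduces such a form to ``oscillators plus free transport directions'': the kernel directions retain an irremovable quadratic part of the shape $\xi_1^2+2x_1\xi_0$, whose quantization is neither self-adjoint-elliptic nor free transport, so there is no static transverse eigenvalue problem and no Hermite ground state to test with. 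The paper's own model is a counterexample to your reduction: the Hessian of $p_{mod}$ at $\rho\in\Sigma$ is $-\xi_0^2+2x_1\xi_0\xi_2+\xi_1^2$, for which $F_p(\rho)$ is nilpotent, $\mathrm{Tr}^+F_p(\rho)=0$, and your construction produces no oscillator directions at all. The classical proofs close this gap by abandoning the static eigenfunction: the transverse profile is taken to solve the time-dependent transport equation with quadratic Hamiltonian $Q(x,D)$ (the Weyl quantization of the Hessian), and the growth rate of the metaplectic evolution $e^{-itQ(x,D)}$ on Gaussian data is what yields $\mathrm{Tr}^+F_p(\rho)/2$, the nilpotent blocks contributing only polynomially. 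You flagged ``possible Jordan structure'' as a bookkeeping obstacle in your last paragraph, but it is not a remainder issue: it changes the model operator itself, and as written your step three fails precisely in the case of main interest to this paper.
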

We call \eqref{eq:IPH} the Ivrii-Petkov-H\"ormander condition (IPH condition, for short). If the strict inequality holds in \eqref{eq:IPH} we call it the strict Ivrii-Petkov-H\"ormander condition (strict IPH condition, for short). For the sufficiency of the IPH condition, we assume that the set of singular points $\Sigma$ of $p=0$ is a $C^{\infty}$ manifold and the following conditions are satisfied:
\begin{equation}
\label{eq:jyoken}
\begin{cases}
\text{$F_p$ has no non-zero real eigenvalues at each point of $\Sigma$},\\
\text{Near  each point of $\Sigma$, $p$ vanishes exactly of order $2$} \\
\text{and the rank of  $\sum_{j=0}
^nd\xi_j\wedge dx_j$ is constant. }
\end{cases}
\end{equation}
According to the spectral type of $F_p(\rho)$, two different possible cases may arise
\begin{align}
\label{eq:type:1}
{\rm Ker}\,F_p^2(\rho)\cap {\rm Im}\,F_p^2(\rho)= \{0\}, \\
\label{eq:type:2}
{\rm Ker}\,F_p^2(\rho)\cap {\rm Im}\,F_p^2(\rho)\neq \{0\}.
\end{align}
We say that $p$ is of spectral type $1$ (resp. type $2$) near $\rho\in\Sigma$ if there is a conic neighborhood $V$ of $\rho$ such that \eqref{eq:type:1} (resp. \eqref{eq:type:2}) holds in $V\cap \Sigma$. We say that there is no transition (of spectral type) if for any $\rho\in \Sigma$ one can find a conic neighborhood $V$ of $\rho$ such that either \eqref{eq:type:1} or \eqref{eq:type:2} holds in $V\cap\Sigma$. 
\begin{them}
\label{thm:jyubun} {\rm(\cite[Theorem 5.1]{Ni:book})} Assume \eqref{eq:jyoken} and that there is no transition and no bicharacteristic tangent to $\Sigma$. Then the Cauchy problem for $P$ is $C^{\infty}$ well posed  under the strict IPH condition.
\end{them}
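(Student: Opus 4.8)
The plan is to derive a microlocal a priori energy estimate
\[
\|u\|_{(m-1)}\le C\,\|Pu\|_{(0)}
\]
in suitable (possibly weighted) Sobolev norms for $u$ supported near the origin, together with the analogous estimate for the adjoint $P^{*}$; by the standard duality-and-approximation argument these two estimates yield existence, uniqueness and finite propagation, that is, $C^{\infty}$ well-posedness. The whole analysis localizes to a conic neighborhood of the double characteristic set $\Sigma$: away from $\Sigma$ the symbol $p$ has at most simple real characteristics, so the classical hyperbolic energy method applies verbatim, and all the difficulty is microlocally concentrated at $\Sigma$.

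Near $\Sigma$ I would first exploit the geometric hypotheses \eqref{eq:jyoken}. Since $\Sigma$ is a smooth manifold on which the restriction of the symplectic form has constant rank and $p$ vanishes exactly to second order, a homogeneous canonical transformation, quantized by a Fourier integral operator, puts $p$ into a symplectic normal form determined by the spectral type of the Hamilton map $F_p$. Because $F_p$ has no non-zero real eigenvalue, its spectrum consists of the purely imaginary pairs $\pm i\mu_j$, accounting for $\mathrm{Tr}^{+}F_p=\sum|\mu_j|$, together with a possible zero eigenvalue whose Jordan structure is governed by the dichotomy \eqref{eq:type:1}/\eqref{eq:type:2}. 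The hypothesis of no transition guarantees that exactly one of these holds uniformly, so that a single normal form, and hence a single symmetrizer construction, is valid throughout a conic neighborhood rather than having to be matched across a type boundary.

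The heart of the proof is the construction of a symmetrizer. Reducing $P$ microlocally to a first-order system, I would build a pseudodifferential symmetrizer whose commutator with the system produces a strictly positive lower-order symbol. The threshold $\mathrm{Tr}^{+}F_p/2$ is the ground-state energy of the transverse harmonic oscillator attached to the spectrum $\pm i\mu_j$ of $F_p$; the subprincipal symbol shifts this ground level by $P_{sub}(\rho)$, and the strict IPH condition $|P_{sub}(\rho)|<\mathrm{Tr}^{+}F_p/2$ is precisely the statement that the shifted level stays strictly positive. That strictly positive gap is the margin the symmetrizer exploits in order to dominate the subprincipal contribution and close the estimate with only a finite loss of derivatives. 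The condition that no bicharacteristic be tangent to $\Sigma$ enters here to rule out trapping: it forces the null bicharacteristic flow either to lie in $\Sigma$ or to leave it transversally, which is what allows the energy flux across the region near $\Sigma$ to be absorbed rather than to accumulate and destroy positivity.

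The main obstacle I anticipate is exactly this symmetrizer construction at $\Sigma$, and in particular the control of the remainders produced by the pseudodifferential calculus so that they do not overwhelm the strict-IPH margin. Because the characteristics are only doubly degenerate and non-effectively hyperbolic, one cannot work in the standard symbol classes: the weight functions must be adapted to the distance to $\Sigma$, and in the type-2 case to the nontrivial Jordan block of $F_p$, so that the commutator remainders are tracked carefully enough to verify that the leading positive term survives. Once the weighted estimate is established uniformly, using no transition for uniformity and the tangency hypothesis for the flux control, the passage from the a priori estimates for $P$ and $P^{*}$ to $C^{\infty}$ well-posedness is routine.
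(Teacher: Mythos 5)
You should first be aware that the paper contains no proof of this statement: Theorem \ref{thm:jyubun} is quoted from \cite[Theorem 5.1]{Ni:book} purely as background, so your proposal can only be measured against the argument in that monograph, which is an extended microlocal energy-estimate construction. Your outline is directionally consistent with it (localization to a conic neighborhood of $\Sigma$, estimates for $P$ and its adjoint plus duality, positivity extracted from the strict IPH margin), but as a proof it has a genuine gap, and you name it yourself: the entire mathematical content of the theorem is the construction of the symmetrizer, i.e.\ the weighted energy near $\Sigma$ with pseudodifferential remainders controlled below the strict-IPH margin, and your text merely stipulates that such a construction can be carried out. Everything you do supply --- the hyperbolic estimate off $\Sigma$, the duality-and-approximation passage to well-posedness, the heuristic that ${\rm Tr}^+F_p/2$ is a transverse ground-state level shifted by $P_{sub}$ --- is the routine part; deferring exactly the hard step means nothing has been proved.

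Two specific assertions would moreover fail as written. First, the claimed reduction of $p$ by a homogeneous canonical transformation to ``a symplectic normal form determined by the spectral type'' is not available in the type-2 case \eqref{eq:type:2}: there the zero eigenvalue of $F_p$ carries a Jordan block of size four, and no full normal form of $p$ near $\Sigma$ exists; indeed for the paper's own $p=-\xi_0^2+2x_1\xi_0\xi_2+\xi_1^2+x_1^3\xi_2^2$ all eigenvalues of $F_p$ on $\Sigma$ vanish, so ${\rm Tr}^+F_p=0$ and the ``transverse harmonic oscillator'' picture is degenerate. The proof in \cite{Ni:book} accordingly works directly with weighted energy estimates, the weights built from the time variable and from symbols measuring the distance to $\Sigma$, rather than through a normal form; the no-transition hypothesis keeps a single such weight scheme valid on a full conic neighborhood, much as you guessed. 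Second, your reading of ``no bicharacteristic tangent to $\Sigma$'' as a soft trapping/flux-absorption device understates its role: that hypothesis is precisely what permits an estimate with a fixed finite loss of derivatives, and when it fails the $C^\infty$ conclusion can genuinely break down, with only Gevrey well-posedness (of order up to $5$, by \cite{BN:jdam}) surviving --- that is exactly the phenomenon the present paper quantifies for $P_{mod}$, whose tangent bicharacteristic is \eqref{eq:bitokusei}. A correct argument must therefore use non-tangency quantitatively, as transversality of the bicharacteristic flow to $\Sigma$ on the scale of the weights entering the energy, not cite it qualitatively.
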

The principal symbol  $p=-\xi_0^2+2x_1\xi_0\xi_2+\xi_1^2+x_1^3\xi_2^2$ of $P_{mod}$  is a typical example of spectral type $2$ with tangent bicharacteristics (note that there is no tangent bicharacteristic if $p$ is of spectral type $1$, see \cite{Iv2}). The set of singular point of $p=0$ with $\xi\neq 0$ is $\Sigma=\{x_1=0, \xi_0=\xi_1=0\}$ which is a $C^{\infty}$ manifold on which $p$ vanishes exactly of order $2$ and a tangent bicharacteristic is given explicitly  by
\begin{equation}
\label{eq:bitokusei}
(x_1,  x_2)=(-x_0^2/4,  x_0^5/80),\quad 
(\xi_0, \xi_1, \xi_2)=(0, x_0^3/8, 1)
\end{equation}
which is  parametrized by $x_0$. The operator $P_{mod}$ shows how the situation becomes to be complicated when a tangent bicharacteristic exists. We now give some  such results: The Cauchy problem for $P_{mod}$  is not locally solvable in $\gamma^{(s)}$ for $s>5$, in particular ill-posed in $C^{\infty}$ while the Cauchy problem for general second order operator $P$ of spectral type $2$ satisfying $P_{sub}=0$ on $\Sigma$ (note that the subprincipal symbol of $P_{mod}$ is identically zero) is well posed in the Gevrey class of order $1<s\leq 5$ (\cite{BN:jdam} see also \cite{Ni:book}). The Cauchy problem for $P_{mod}+SD_2$ with $0\neq S\in \C$ is not locally solvable in $\gamma^{(s)}$ for $s>3$ (Proposition \ref{pro:main} below) while the Cauchy problem for general second order operator $P$ of spectral type $2$ with ${\rm codim}\,\Sigma=3$ is well posed in the Gevrey class of order $1<s<3$  for any lower order term even if a tangent bicharacteristic exists (\cite{BN:JHDE}).

\section{A family of exact solutions}

%%%%%%%%%%%%%%%%

First recall
\begin{lemma}
\label{lem:b2:zero}{\rm(\cite[Proposition 8.1]{Ni:book})}The Cauchy problem for $P_{mod}+\sum_{j=0}^1b_jD_j$ is not locally solvable in $\gamma^{(s)}$ at the origin  for any $b_0, b_1\in \C$ if $s>5$.
\end{lemma}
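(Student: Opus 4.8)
The plan is to refute local solvability of $P:=P_{mod}+\sum_{j=0}^1b_jD_j$ in $\gamma^{(s)}$ for $s>5$ by producing a \emph{family of exact solutions} concentrated along the tangent bicharacteristic \eqref{eq:bitokusei}, and then superposing them into Gevrey-$(s)$ Cauchy data whose only candidate solution blows up in every neighborhood of the origin. Since the coefficients of $P$ do not depend on $x_2$, I would look for exact solutions of the special form $u_\eta=e^{i\eta x_2}w_\eta(x_0,x_1)$ with $\eta>0$ large, for which $Pu_\eta=0$ is equivalent to $P_\eta w_\eta=0$ with
\[
P_\eta=-D_0^2+(2\eta x_1+b_0)D_0+D_1^2+b_1D_1+\eta^2x_1^3.
\]
The absence of a $b_2D_2$ term is exactly what makes this reduction clean: the surviving lower-order terms $b_0D_0,b_1D_1$ are of strictly lower order in the scaling below, whereas a $b_2\eta$ term would not be.

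Next I would rescale to the tangent bicharacteristic by setting $x_0=\eta^{-1/5}t$, $x_1=\eta^{-2/5}y$. A direct computation shows that $D_1^2$, $2\eta x_1D_0$ and $\eta^2x_1^3$ are all of order $\eta^{4/5}$, while $D_0^2$, $b_1D_1$, $b_0D_0$ are of strictly lower order ($\eta^{2/5}$, $\eta^{2/5}$, $\eta^{1/5}$ respectively); hence the leading operator is the $\eta$-independent model $\mathcal L=D_y^2+2yD_t+y^3$. The exponent $1/5$ here is the whole point: it is forced by the tangency $x_1\sim x_0^2$, $x_2\sim x_0^5$ of \eqref{eq:bitokusei}, and it is what ultimately produces the threshold $s=5$. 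Freezing $D_t=\tau$ turns $\mathcal L$ into the anharmonic operator $D_y^2+2\tau y+y^3$, whose potential is confining as $y\to+\infty$ but not as $y\to-\infty$; the admissible decaying spectral values $\tau$ are therefore complex with $\mathrm{Im}\,\tau\neq0$, which is the analytic signature of spectral type $2$. Consequently the model admits solutions $e^{-i\tau t}\psi(y)$ that grow like $e^{|\mathrm{Im}\,\tau|\,t}$ in the scaled time.

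I would then upgrade these model solutions to exact solutions $w_\eta$ of $P_\eta$, localized in the normal direction about the bicharacteristic $x_1=-x_0^2/4$, and normalize their Cauchy data $\Phi_\eta=(u_\eta,D_0u_\eta)|_{x_0=0}$ to be of at most polynomial size in $\eta$. Returning to the original variables, covering a fixed $x_0$-neighborhood $[0,\delta]$ corresponds to letting the scaled time run up to $t\sim\delta\,\eta^{1/5}$, so the model growth $e^{|\mathrm{Im}\,\tau|\,t}$ accumulates into an amplification factor of order $e^{c\,\eta^{1/5}}$ for $u_\eta$ at interior points, against polynomially bounded data.

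With the family $\{u_\eta\}$ in hand, I would fix a lacunary sequence $\eta_k\to\infty$ and weights $|c_k|\sim e^{-h\eta_k^{1/s}}$, and set $\Phi=\sum_k c_k\Phi_{\eta_k}$. Because the $x_2$-frequency of $u_{\eta_k}$ is $\eta_k$ and Gevrey-$(s)$ regularity corresponds to Fourier decay $e^{-h|\eta|^{1/s}}$, the weights are chosen precisely so that $\Phi\in(\gamma^{(s)}(\R^2))^2$, and the finite partial sums are exact solutions whose data converge to $\Phi$ in $\gamma^{(s)}$. For $s>5$ one has $1/s<1/5$, so at an interior point each term has size $\sim e^{-h\eta_k^{1/s}+c\eta_k^{1/5}}\to\infty$; thus the partial-sum solutions blow up in $C^2$ on any fixed neighborhood while their data stay bounded in $\gamma^{(s)}$. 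A standard Baire-category and closed-graph argument converts the hypothesis of local solvability for all $\gamma^{(s)}$ data into a fixed neighborhood together with a continuous-dependence estimate, which the partial sums violate, giving the contradiction. The hard part will be the third step: producing genuinely \emph{exact} solutions of $P_\eta$ rather than mere WKB/Gaussian-beam approximants, and controlling the remainder from $D_0^2$ and the lower-order terms uniformly in $\eta$, so that the clean growth rate $e^{c\eta^{1/5}}$ of the model survives for the true operator.
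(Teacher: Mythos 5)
Your overall skeleton does coincide with the method behind this lemma (the paper quotes it from \cite[Proposition 8.1]{Ni:book}, whose argument is the same scheme this paper runs for Proposition \ref{pro:main} in the case $b_2\neq 0$): separation of variables in $(x_0,x_2)$, the $\lambda=\eta^{1/5}$, $x_1\sim\eta^{-2/5}$ scaling, reduction to the cubic ODE $u''=(x^3+2\tau x+\cdots)u$ as in \eqref{eq:a8}, growth $e^{c\eta^{1/5}x_0}$ against Gevrey-$s$ data, and a closed-graph/a-priori-estimate refutation as in Lemma \ref{lem:futosiki}. But you have misplaced the difficulty, and the mislocation hides a genuine gap. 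Exactness is \emph{free}: since the coefficients are independent of $x_0$ and $x_2$, the ansatz $e^{i\tau x_0+i\eta x_2}v(x_1)$ solves the equation exactly as soon as $v$ solves the ODE exactly, so no ``upgrading of WKB/Gaussian-beam approximants localized about the parabola'' is needed or even relevant. What is genuinely hard is the step you dispatch with the heuristic ``the potential is not confining as $y\to-\infty$, hence $\mathrm{Im}\,\tau\neq 0$'': one must produce, for each large $\eta$, a root $\tau(\eta)$ of the exact connection (Stokes) condition $C_0(2\tau,a_3(\tau,\eta))=0$ for Sibuya's subdominant solution ${\mathcal Y}_0$, with the \emph{correct sign} $\mathrm{Im}\,\tau<0$ (the analogue of \eqref{eq:expo}); with the wrong sign your solutions decay where you test, and $\mathrm{Im}\,\tau\neq0$ alone does not suffice — note the paper needs the case distinction \eqref{eq:argA} plus a coordinate flip precisely to fix this sign. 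In the paper and in the book this existence is the entire analytic content: the connection coefficient is tied by rotation to eigenvalues of $p^2+x^2+i\beta x^3$, and one invokes the Caliceti--Graffi--Maioli analytic continuation (Proposition \ref{pro:Caliceti_1}) together with a Rouch\'e argument to solve the Stokes equation. Nothing in your outline substitutes for this.

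Two further concrete gaps. First, your normalization claim that the Cauchy data $\Phi_\eta$ have ``at most polynomial size in $\eta$'' is false as stated: the data involve ${\mathcal Y}_0(\lambda^2x_1;\cdot)$ and its $x_1$-derivatives, whose bounds carry factors of the type $A^k(k!)^3e^{c\lambda^{\gamma}}$ (cf.\ Lemma \ref{lem:V:bibun}, where $\gamma=4/3$ when $b_2\neq0$); the whole point of the uniform-in-$\lambda$ asymptotics of Proposition \ref{pro:Esti:Y} and its corollaries is to show that the relevant exponent stays strictly below the growth exponent, which is exactly what produces the threshold ($5$ here, $3$ in Proposition \ref{pro:main}) — without such uniform ODE asymptotics you can bound neither the data nor obtain the pointwise lower bound (the analogue of Lemma \ref{lem:X:1:2}) exhibiting growth. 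Second, your final step applies the continuous-dependence estimate to your partial sums, but that estimate controls the solution \emph{produced by the solvability hypothesis}, not your exact solutions: you must truncate the data to land in the Banach space $\gamma_0^{(s),h}(K)$ and then invoke Holmgren uniqueness (Proposition \ref{pro:Holmgren}) to identify the solver's solution with the exact one near the origin; this step is absent from your plan. Relatedly, the lacunary superposition buys nothing (a single family $U_{\eta}$ against the estimate \eqref{eq:G:hyoka} suffices) while creating extra bookkeeping of its own: if the data norms grow like $e^{c\eta^{\theta}}$ with $\theta>1/s$ (which happens for $s$ large), the weights $e^{-h\eta_k^{1/s}}$ do not make your series converge in $\gamma^{(s)}$ as claimed, and the weight exponent must instead be placed strictly between $\theta$ and $1/5$.
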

Thus in order to prove Theorem \ref{thm:main:bis} it suffices to prove the following result which also improves \cite[Theorem 1.3]{BN:JHDE}.
\begin{pro}
\label{pro:main}The Cauchy problem for $P_{mod}+\sum_{j=0}^2b_jD_j$ is not locally solvable in $\gamma^{(s)}$ at the origin  for any $b_0, b_1\in \C$ and $0\neq b_2\in\C$ if $s>3$.
\end{pro}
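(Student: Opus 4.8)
The plan is to establish non-solvability by constructing an explicit family of exact solutions whose behavior violates solvability, following the general strategy for such results: if the Cauchy problem were locally solvable in $\gamma^{(s)}$, one could derive an a priori energy-type estimate, and the constructed solutions would contradict it. Concretely, I would look for solutions concentrated along the tangent bicharacteristic \eqref{eq:bitokusei}, using a WKB/geometric-optics ansatz adapted to the scaling of the operator. The title of the section (``A family of exact solutions'') signals that the sharp Gevrey threshold $s>3$ should come from an explicitly solvable reduction rather than from a general parametrix construction.

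First I would exploit the structure of $P_{mod}+\sum_{j=0}^2 b_jD_j$ by Fourier-transforming in $x_2$ (since the coefficients depend only on $x_1$, the dual variable $\xi_2=\lambda$ is conserved) and seeking solutions of the form $u=e^{i\lambda x_2}v(x_0,x_1;\lambda)$ for large $\lambda$. Substituting, $v$ satisfies an ordinary-in-$x_1$, evolution-in-$x_0$ equation in which $b_2D_2$ contributes the term $b_2\lambda$, i.e. a genuine zeroth-order shift that survives the homogeneity scaling. Next I would introduce the anisotropic rescaling suggested by \eqref{eq:bitokusei}, namely $x_0\sim\lambda^{-1/5}$, $x_1\sim\lambda^{-2/5}$, so that all terms of the principal symbol balance; under this scaling the lower-order term $b_2\lambda$ dominates in a way that, heuristically, lowers the critical Gevrey order from $5$ to $3$. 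The key computation is to identify the reduced model operator after rescaling and to solve it (or construct asymptotic solutions for it) explicitly, extracting the exponential growth/oscillation rate in $\lambda$.

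The central step is then to build, for each large $\lambda$, a solution $u_\lambda$ with prescribed Cauchy data at $x_0=0$ that are uniformly bounded in the appropriate $\gamma^{(s)}$-sense (so that a fixed $\Phi$ could serve as data), while $u_\lambda$ itself grows like $\exp(c\lambda^{1/3})$ — faster than any growth compatible with a genuine $C^2$ solution on a fixed neighborhood when $s>3$. Here the exponent $1/3$ is exactly what makes $s=3$ critical: the admissible Gevrey data grow at most like $\exp(C\lambda^{1/s})$, so growth $\exp(c\lambda^{1/3})$ of the solution cannot be absorbed once $s>3$. I would then superpose the $u_\lambda$ over a suitable range of $\lambda$ (an integral against a density) to produce a single function that is $\gamma^{(s)}$ as Cauchy data but admits no $C^2$ solution near the origin, contradicting local solvability.

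The main obstacle I anticipate is the rigorous control of the constructed solutions: verifying that the formal WKB/rescaled solution is genuinely an \emph{exact} solution (or correcting an approximate one to an exact one with controlled error), and tracking how the coefficient $b_2\neq 0$ enters the phase and amplitude so that it definitively produces the $\lambda^{1/3}$ growth rate rather than the $\lambda^{1/5}$ rate governing the $b_2=0$ case of Lemma \ref{lem:b2:zero}. In particular, the sign/argument of $b_2$ must be handled so that the relevant solution grows rather than decays along the bicharacteristic, and the construction must be uniform in $\lambda$ to permit the final superposition. Once the exact solution family with the sharp growth exponent is in hand, the passage from growth to non-solvability is the standard duality/closed-graph argument and should be routine.
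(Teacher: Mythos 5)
Your outline correctly identifies the global strategy --- a family of exact solutions indexed by a large parameter, with the term $b_2\lambda$ (after Fourier transform in $x_2$) shifting the critical Gevrey index from $5$ to $3$, and a closed-graph/Holmgren argument at the end --- and your exponent bookkeeping ($\exp(c\lambda^{1/3})$ growth versus $\exp(C\lambda^{1/s})$ data cost in your normalization) is consistent. But the proposal stops exactly where the real work begins: you assert the existence of a solution family growing like $\exp(c\lambda^{1/3})$ without giving any mechanism that produces it. In the paper the separated ansatz \eqref{eq:teigi:U} reduces the problem to the cubic-potential ODE \eqref{eq:a8}, and the requirement that $V_\lambda$ stay bounded for $x_1$ in a fixed set on \emph{both} sides of $x_1=0$ is a two-sided subdominance condition. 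This is not a WKB matter: it is the connection (``Stokes'') equation $C_0(2\xi_0,a_3)=0$ of \eqref{eq:Stokes} for Sibuya's solutions $\mathcal{Y}_k$, and it is solved only indirectly, by identifying it (Lemma \ref{lem:a1}) with the eigenvalue problem for the non-self-adjoint oscillator $p^2+x^2+i\beta x^3$, invoking the analytic continuation of its eigenvalues across the imaginary axis (Proposition \ref{pro:Caliceti_1}), and then running a Rouch\'e argument to extract $\xi_0(\lambda)$ with ${\mathsf{Im}}\,\xi_0<0$ of the precise size that yields the growth. Without this step you have no reason why an admissible $\xi_0$ with nonzero imaginary part of the correct sign exists at all; moreover the sign constraint is exactly where the hypothesis \eqref{eq:argA} on $\arg b_2$ (and the coordinate flip handling the remaining arguments of $b_2$) enters, so ``handling the sign of $b_2$'' is not a technicality to be deferred but part of the existence proof.

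Two further gaps. First, you need the Cauchy data built from $V_\lambda$ to lie in $\gamma^{(s)}$ with norm growing no faster than $\exp(c\lambda^{4/3})$-type quantities (so that for $s>3$ the data cost stays strictly below the solution growth); this requires uniform-in-$\lambda$ asymptotics for $\mathcal{Y}_0(\lambda^2x;a\lambda^{2/3},b\lambda)$ and all its derivatives (Proposition \ref{pro:Esti:Y}, Proposition \ref{pro:inside:many}, Lemma \ref{lem:V:bibun}), together with a lower bound on $|\mathcal{Y}_0|$ at an interior point (Lemma \ref{lem:X:1:2}) so that the exponential growth in $x_0$ is not cancelled by a decaying amplitude; none of this is addressed. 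Second, the final superposition over $\lambda$ you propose is unnecessary and would demand extra uniformity: the paper instead applies the a priori estimate of Lemma \ref{lem:futosiki} to each member of the family, uses Holmgren uniqueness (Proposition \ref{pro:Holmgren}) to identify the solution with $U_\lambda$ on a fixed neighborhood, and lets $\lambda\to\infty$. As written, the proposal is a plausible plan rather than a proof.
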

To prove Proposition \ref{pro:main} we repeat the proof of \cite[Theorem 1.3]{BN:JHDE} with obvious minor changes. Look for a family of solutions to  $(P_{mod}+\sum_{j=0}^2b_jD_j)U_{\lambda}=0$ in the form
\begin{equation}
\label{eq:teigi:U}
U_{\lam}(x)=e^{ i\xi_0\lam x_0}V_{\lam}(x'),\quad V_{\lam}(x')=e^{\pm \lam^5x_2-i(b_1/2)x_1}u(\lam^2x_1),\;\;\xi_0=\xi_0(\lam)
\end{equation}
that is, we look for $u(x)$ satisfying
\begin{equation}
\label{eq:a8}
u''(x)=\big(x^3+ 2\xi_0 x+ b_2\lam-\xi_0^2\lam^{-2}+ b_0\xi_0\lam^{-3}-b_1^2\lam^{-4}/4\big)u(x).
\end{equation}
To study solutions to \eqref{eq:a8} we consider 
\begin{equation}
\label{eq:aa1}
u''(x)=(x^3+a_2x+a_3)u(x),\quad x\in\C,\;\;a_j\in\C,\;\;j=2, 3.
\end{equation}
Let ${\mathcal Y}_0(x;a)$, $a=(a_2,a_3)$ be the  solution given in \cite[Chapter 2]{Si} to (\ref{eq:aa1}) which has asymptotic representation
\begin{equation}
\label{eq:zenkin:a}
{\mathcal Y}_0(x; a)\simeq x^{-3/4}\big[1+\sum_{N=1}^{\infty}B_Nx^{-N/2}\big]e^{-E(x,a)}
\end{equation}
as $x$ tends to infinity in any closed subsector of the open sector $|\arg x|<3\pi/5$ where
\[
E(x,a)=\frac{2}{5}x^{5/2}+a_2x^{1/2}
\]
and $A_N$, $B_N$ are polynomials in $(a_2,a_3)$. Let $\omega=e^{2\pi i/5}$ and set
\begin{equation}
\label{eq:setuzoku}
{\mathcal Y}_k(x;a)={\mathcal Y}_0(\omega^{-k}x; \omega^{-2k}a_2,\omega^{-3k}a_3),\quad k=0, 1, 2, 3, 4
\end{equation}
which are also solutions to (\ref{eq:aa1}).  Recall that \cite[Chpater 17]{Si}
\[
{\mathcal Y}_k(x;a)=C_k(a){\mathcal Y}_{k+1}(x;a)-\om {\mathcal Y}_{k+2}(x;a)
\]
where $C_k(a)$ are entire analytic in $a=(a_2,a_3)$ and $C_k(a_2,a_3)=C_0(\omega^{-2k}a_2,\omega^{-3k}a_3)$. Choose 
\begin{equation}
\label{eq:aa2}
\begin{split}
&u(x)={\mathcal Y}_0(x; a)=C_0(a){\mathcal Y_1}(x; a)-\omega{\mathcal Y_2}(x; a),\\
 &a_2=2\xi_0,\quad a_3= b_2\lam-\xi_0^2\lam^{-2}+ b_0\xi_0\lam^{-3}-b_1^2\lam^{-4}/4
\end{split}
\end{equation}
which solves \eqref{eq:a8}. We require that  $V_{\lam}(x')$  is bounded  as  $\lam\to +\infty$ when $|x'|$ remains in a bounded set. Note  \eqref{eq:setuzoku} and
\begin{equation}
\label{eq:heiho}
\begin{split}
(\om^{-1}x)^{5/2}=-i|x|^{5/2},\quad 2\om^{-2} \xi_0(\om^{-1}x)^{1/2}=-2i \xi_0|x|^{1/2},\\
(\om^{-2}x)^{5/2}=i|x|^{5/2},\quad 2\om \xi_0(\om^{-2}x)^{1/2}=2i \xi_0|x|^{1/2}
\end{split}
\end{equation}
for $x<0$. Then $|{\mathcal Y}_1(\lam^2x; a_2, a_3)|$ and $|{\mathcal Y}_2(\lam^2x;a_2, a_3)|$ behaves like $e^{2{\mathsf{Re}}(i\xi_0)\lam |x|^{1/2}}$ and $e^{-2{\mathsf{Re}}(i\xi_0)\lam |x|^{1/2}}$  respectively as $x\to -\infty$.  Since $\omega\neq 0$, taking \eqref{eq:aa2} into account, the requirement for boundedness implies that
\begin{align}
\label{eq:Stokes}
C_0(2\xi_0, a_3)=0,\\
\label{eq:expo}
-{\mathsf{Re}}(i\xi_0)={\mathsf{Im}}\,\xi_0<0.
\end{align}

Instead of solving directly the ``Stokes equation" (\ref{eq:Stokes}) we go rather indirectly. Let us consider
\[
H(\be)=p^2+x^2+i\be x^3
\]
as an operator in $L^2({\mathbb R})$ with the domain $D(H(\be))=D(p^2)\cap D(x^3)$. Here $p^2$ denotes the self-adjoint realization of $-d^2/dx^2$ defined in $H^2({\mathbb R})$ and by $D(x^3)$ we mean the domain of the maximal multiplication operator by the function $x^3$.

\begin{pro}
\label{pro:Caliceti_1}{\rm \cite[Corollary 2.16, Lemma 3.1]{CGM})}
 Let $k\in {\mathbb N}_0$ and $\ep>0$ be given. Then there is a $B>0$ such that for $|\be|<B$, ${\mathsf {Re}}\,\be>0$, $H(\be)$ has exactly one eigenvalue $E_k(\be)$ near $2k+1$. Such eigenvalues are analytic functions of $\be$ for $|\be|<B$, ${\mathsf{Re}}\,\be>0$, and admit an analytic continuation across the imaginary axis to the whole sector $|\be|<B$, $|\arg{\be}|<\frac{5\pi}{8}-\ep$ and 
 uniformly asymptotic to the following  formal Taylor expansion in powers of $\be^2$
\begin{equation}
\label{eq:Eigenvalue}
\sum_{j=0}^{\infty}a_{2j}\be^{2j},\quad a_0=2k+1
\end{equation}
 near $\be=0$ in any closed subsector in $|\arg{\be}|<\frac{5\pi}{8}-\ep$.
\end{pro}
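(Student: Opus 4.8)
The plan is to realize $H(\be)$ as a genuine closed operator with discrete spectrum by means of complex dilation, and then to run analytic perturbation theory around the harmonic oscillator $H(0)=p^2+x^2$. First I would introduce the dilation group $U(\theta)f(x)=e^{\theta/2}f(e^{\theta}x)$ and consider the conjugated family
\[
H_{\theta}(\be)=U(\theta)H(\be)U(\theta)^{-1}=e^{-2\theta}p^2+e^{2\theta}x^2+i\be\,e^{3\theta}x^3 .
\]
For $\be$ directly on the real line the cubic term $i\be x^3$ is not sign-definite, so $H(\be)$ is not manifestly sectorial; the role of $\theta$ is precisely to fix this. For suitable complex $\theta$ the rotated term $i\be e^{3\theta}x^3$ dominates at infinity and renders $H_{\theta}(\be)$ a closed, sectorial operator with compact resolvent, provided the range of $\theta$ is chosen so that the potential has the decay directions dictated by the five-fold (angle $2\pi/5$) Sibuya geometry of the cubic equation, i.e.\ the same $\om=e^{2\pi i/5}$ structure used above. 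Since $H(0)=p^2+x^2$ has the simple, isolated eigenvalues $2k+1$ with Hermite eigenfunctions, for $\be$ small the resolvent of $H_{\theta}(\be)$ differs from that of $H_{\theta}(0)$ by an operator that is small in the appropriate sense, so $H_{\theta}(\be)$ has exactly one eigenvalue $E_k(\be)$ in a fixed neighborhood of $2k+1$.

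Next I would invoke Kato's theory of analytic families of type (A): for fixed admissible $\theta$ the map $\be\mapsto H_{\theta}(\be)$ is holomorphic with $\be$-independent domain, and since the eigenvalue $2k+1$ is isolated and simple, $E_k(\be)$ is holomorphic in $\be$ near the origin. Dilation analyticity guarantees that $E_k(\be)$ is independent of $\theta$ where both realizations are defined; letting $\theta$ range over its admissible set then furnishes the analytic continuation of $E_k(\be)$ from $\{\mathsf{Re}\,\be>0\}$ across the imaginary axis into the full sector $|\arg\be|<\frac{5\pi}{8}-\ep$. The opening angle is exactly the set of $\arg\be$ for which some admissible $\theta$ keeps $H_{\theta}(\be)$ sectorial with discrete spectrum, and it traces back to the $x^{5/2}$ WKB exponent (equivalently the exponent $5/2$ and the sector $|\arg x|<3\pi/5$ appearing in \eqref{eq:zenkin:a}).

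To see that the expansion runs in powers of $\be^2$ I would use parity: with $\Pi f(x)=f(-x)$ one has $\Pi p^2\Pi=p^2$, $\Pi x^2\Pi=x^2$, $\Pi x^3\Pi=-x^3$, hence $\Pi H(\be)\Pi=H(-\be)$, and $\Pi$ commutes with $U(\theta)$ so the same holds at the dilated level. Thus $H(\be)$ and $H(-\be)$ are unitarily equivalent, their spectra coincide, and by uniqueness of the eigenvalue near $2k+1$ we get $E_k(\be)=E_k(-\be)$. An even holomorphic function has a Taylor series in $\be^2$, with leading term $a_0=2k+1$; the coefficients $a_{2j}$ are then identified with the Rayleigh--Schr\"odinger coefficients built from $H(0)$ and the perturbation $ix^3$, the odd-order terms vanishing for the same parity reason.

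The main obstacle, and the technical heart following \cite{CGM}, is the final clause: that $E_k(\be)$ is \emph{uniformly asymptotic} to $\sum_j a_{2j}\be^{2j}$ in every closed subsector of $|\arg\be|<\frac{5\pi}{8}-\ep$. This is a quantitative Watson--Nevanlinna statement, not merely holomorphy, and I would derive it from order-$1$ Borel summability of the perturbation series in $\be$. The decisive input is a family of resolvent estimates for the dilated operator $H_{\theta}(\be)$, uniform as $\be\to0$ up to the boundary rays of the sector, which yield Gevrey-$1$ remainder bounds
\[
\Big|E_k(\be)-\sum_{j=0}^{N-1}a_{2j}\be^{2j}\Big|\le C\,\sigma^{2N}\,(2N)!\,|\be|^{2N},\qquad N\in\N,
\]
with $C,\sigma$ independent of $N$ and of $\be$ in the closed subsector. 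Since the half-opening $\frac{5\pi}{8}$ exceeds $\frac{\pi}{2}$, such bounds meet the hypotheses of Watson's theorem and deliver both the Borel summability and the stated uniform asymptotics. Securing these uniform estimates---that is, controlling the non-self-adjoint dilated cubic operator near $\be=0$ uniformly up to the boundary of the sector---is where essentially all the work lies; the remaining ingredients are standard analytic perturbation theory together with the parity argument.
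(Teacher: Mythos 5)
The paper offers no proof of this proposition at all---it is quoted from \cite{CGM} (Corollary 2.16, Lemma 3.1), with the text saying only ``for the proof we refer to \cite{CGM}, \cite{Simon}''---and your outline (complex dilation to turn $H_{\theta}(\be)=e^{-2\theta}p^2+e^{2\theta}x^2+i\be e^{3\theta}x^3$ into a sectorial operator with compact resolvent, Kato type-(A) perturbation theory around the harmonic oscillator, parity to eliminate the odd-order coefficients, and uniform Gevrey-type remainder bounds in a sector of half-opening $>\pi/2$ feeding into a Watson/Nevanlinna argument) is exactly the strategy of that reference, so you are following the same route as the source on which the paper relies. One imprecision worth correcting: since the perturbation series is divergent, $E_k$ is not holomorphic at $\be=0$, and $-\be$ lies outside the sector on which $E_k$ is defined, so evenness should be asserted of the formal Rayleigh--Schr\"odinger series (whose odd coefficients vanish by the parity argument you give), rather than deduced from ``$E_k(\be)=E_k(-\be)$ for an even holomorphic function.''
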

For the proof we refer to \cite{CGM}, \cite{Simon}. Now we have
\begin{lemma}
\label{lem:a1}{\rm(\cite[Lemma 6.1]{BN:JHDE})}
Assume that ${\mathsf{Re}}\,\be>0$ and $E(\be)$ is an eigenvalue of the problem
\begin{equation}
\label{eq:a1}
-u''(x)+( x^2+i\be x^3)u(x)=E(\be)u(x)
\end{equation}
that is $(\ref{eq:a1})$ has a solution $0\neq u\in D(H(\be))$. Then we have
\begin{equation}
\label{eq:a2}
C_0\Big(-\frac{\om^2}{3}\be^{-\frac{8}{5}},\, \om^3\big\{\frac{2}{27}\be^{-\frac{12}{5}}+\be^{-\frac{2}{5}}E(\be)\bigl\}\Big)=0
\end{equation}
where $\be^{-j/5}=(\be^{-1/5})^{j}$ and the branch $\be^{\pm1/5}$ is chosen such that $|\arg \be^{\pm1/5}|<\pi/10$.
\end{lemma}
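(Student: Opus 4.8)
The plan is to reduce the eigenvalue equation \eqref{eq:a1} to Sibuya's normal form \eqref{eq:aa1} by an affine substitution, and then to extract the vanishing of $C_0$ from the global Stokes structure of the subdominant solutions $\mathcal{Y}_k$ together with the connection formula. Writing \eqref{eq:a1} as $u''=(i\beta x^3+x^2-E(\beta))u$ and substituting $x=\alpha t+\mu$, the function $U(t):=u(\alpha t+\mu)$ satisfies $U''=\alpha^2\big(i\beta(\alpha t+\mu)^3+(\alpha t+\mu)^2-E\big)U$. I fix $\mu$ and $\alpha$ by two normalizations: $\mu=i/(3\beta)$ makes the coefficient of $t^2$ vanish (since $3i\beta\mu+1=0$), while $\alpha=c\,\beta^{-1/5}$ with $c^5=-i$ and the branch $|\arg\beta^{-1/5}|<\pi/10$ normalizes the leading coefficient $i\beta\alpha^5$ to $1$. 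With these choices $U$ solves \eqref{eq:aa1} with $a_2=\alpha^3\mu$ and $a_3=\alpha^2(i\beta\mu^3+\mu^2-E)$.

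Next I would carry out the elementary evaluation of $a_2$ and $a_3$. Using $3i\beta\mu=-1$ one finds $a_2=\alpha^3\mu=\tfrac{1}{3}ic^3\,\beta^{-8/5}$, while $i\beta\mu^3+\mu^2=-\tfrac{2}{27}\beta^{-2}$ gives $a_3=-c^2\big(\tfrac{2}{27}\beta^{-12/5}+\beta^{-2/5}E\big)$. Fixing the fifth root $c$ of $-i$ so that $ic^3=-\omega^2$ and $-c^2=\omega^3$ — a single such $c$ satisfies both, and it is compatible with $|\arg\beta^{\pm1/5}|<\pi/10$ — turns $a_2$ and $a_3$ into exactly the two arguments of $C_0$ in \eqref{eq:a2}. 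This step is pure branch bookkeeping; the only thing to check is that one and the same $c$ produces both prescribed powers of $\omega$.

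The decisive step is to interpret the eigenvalue condition. For $\mathsf{Re}\,\beta>0$ the operator $H(\beta)$ is a genuine operator on $L^2(\mathbb{R})$ (Proposition \ref{pro:Caliceti_1}), and an eigenfunction $0\neq u\in D(H(\beta))$ is a solution of \eqref{eq:a1} that decays, hence is recessive, as $x\to\pm\infty$ along the real axis. Recall from \eqref{eq:zenkin:a} and \eqref{eq:setuzoku} that $\mathcal{Y}_k$ is recessive on the sector $|\arg t-2\pi k/5|<\pi/5$. Under $t=(x-\mu)/\alpha$ the real $x$-axis maps to a full line in the $t$-plane whose two ends point in the directions $-\arg\alpha$ and $\pi-\arg\alpha$ (mod $2\pi$). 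Since $\mathsf{Re}\,\beta>0$ gives $\arg\alpha\in(\pi,6\pi/5)$ for the chosen branch, these two rays lie in the recessive sector $(3\pi/5,\pi)$ of $\mathcal{Y}_2$ and in the recessive sector $(-\pi/5,\pi/5)$ of $\mathcal{Y}_0$, respectively. Hence the recessive solution at $x\to+\infty$ is a multiple of $\mathcal{Y}_2$ and the recessive solution at $x\to-\infty$ is a multiple of $\mathcal{Y}_0$, so an eigenfunction exists precisely when $\mathcal{Y}_0$ and $\mathcal{Y}_2$ are linearly dependent, i.e. $W[\mathcal{Y}_0,\mathcal{Y}_2]=0$. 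From $\mathcal{Y}_0=C_0\mathcal{Y}_1-\omega\mathcal{Y}_2$ we get $W[\mathcal{Y}_0,\mathcal{Y}_2]=C_0\,W[\mathcal{Y}_1,\mathcal{Y}_2]$, and since the adjacent recessive solutions $\mathcal{Y}_1,\mathcal{Y}_2$ are independent this vanishes exactly when $C_0(a_2,a_3)=0$, which is \eqref{eq:a2}.

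The main obstacle is this last step: one must verify, uniformly for all $\beta$ with $\mathsf{Re}\,\beta>0$, that the two rotated rays land in the sectors of $\mathcal{Y}_0$ and $\mathcal{Y}_2$ and not of neighbouring solutions, so that the relevant connection coefficient is $C_0$ rather than some $C_k$ with $k\neq0$; this is exactly where the hypothesis $\mathsf{Re}\,\beta>0$ and the branch restriction $|\arg\beta^{\pm1/5}|<\pi/10$ enter. The functional-analytic input that for $\mathsf{Re}\,\beta>0$ the $L^2$ eigenfunctions coincide with the doubly-recessive Sibuya solutions is borrowed from \cite{CGM}, \cite{Simon}; everything else is the affine reduction and the branch computation of the first two steps.
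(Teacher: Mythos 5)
Your argument is correct: the affine substitution $x=\alpha t+\mu$ with $\mu=i/(3\beta)$ and $\alpha=c\,\beta^{-1/5}$, $c^5=-i$, does reduce \eqref{eq:a1} to \eqref{eq:aa1} with precisely the two arguments of $C_0$ appearing in \eqref{eq:a2} (the single root $c=e^{-9\pi i/10}$ satisfies both $ic^{3}=-\omega^{2}$ and $-c^{2}=\omega^{3}$), and since ${\mathsf{Re}}\,\beta>0$ forces the images of the two ends of the real axis into the open recessive sectors of ${\mathcal Y}_2$ and ${\mathcal Y}_0$ respectively, the $L^2$ eigenfunction condition is exactly the proportionality of ${\mathcal Y}_0$ and ${\mathcal Y}_2$, hence $C_0=0$ by the connection formula. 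The present paper gives no proof of Lemma \ref{lem:a1}, quoting it from \cite[Lemma 6.1]{BN:JHDE}, and your reduction-to-Sibuya-normal-form plus Stokes-multiplier argument is essentially the proof given there.
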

Let $E(\be)$, $|\be|<B$, ${\mathsf Re}\,\be>0$ be an eigenvalue which is analytically continued to the sector $|\be|<B$, $|{\rm arg}\,\be|<5\pi/8$ by Proposition \ref{pro:Caliceti_1} (though when $|{\rm arg}\,\be|=\pi/2$, $H(\be)$ admits  infinitely many distinct self-adjoint extensions, see \cite{CGM}). Since $C_0(a_2, a_3)$ is entire analytic in $(a_2, a_3)$ then \eqref{eq:a2} holds in this sector. Thanks to Lemma \ref{lem:a1}, if $\xi_0$ satisfies
\begin{equation}
\label{eq:a9}
\left\{\begin{array}{l}
\displaystyle{2\xi_0=-\frac{\omega^2}{3}\be^{-\frac{8}{5}},}\\[8pt]
\displaystyle{a_3=\omega^3\big\{\frac{2}{27}\be^{-\frac{12}{5}}+\be^{-\frac{2}{5}}E(\be)\big\}}
\end{array}\right.
\end{equation}
then \eqref{eq:Stokes} is satisfied hence we have
\[
u(\lam^2 x)={\mathcal Y}_0(\lam^2x;2\xi_0,a_3)
=-\om{\mathcal Y}_2(\lam^2x;2\xi_0, a_3).
\]
Therefore  we look for $\xi_0$ satisfying \eqref{eq:a9} and   \eqref{eq:expo}. Plugging $\xi_0=-\omega^2\be^{-8/5}/6$ into the second equation, (\ref{eq:a9}) is reduced to
\begin{equation}
\label{eq:10}
\begin{split}
\frac{2}{27}\om^3\be^{-\frac{12}{5}}+E(\be)
\om^3\be^{-\frac{2}{5}}+\frac{1}{36}
\om^4\be^{-\frac{16}{5}}\lam^{-2}\\
+ \frac{b_0}{6}\om^2\be^{-8/5}\lam^{-3}= b_2\lam-\frac{b_1^2}{4}\lam^{-4}.
\end{split}
\end{equation}
%

%%%%%%%%%%%%%%%%%%%%%%%%%%%%%%%%%%%%%%%%%%%%%%%
\subsection{Solving the equation (\ref{eq:10}) }
\label{sec:solv}

Solve (\ref{eq:10}) with respect to $\be$ where $E(\be)$ is one of $E_k(\be)$ which is analytic in $|\arg{\be}|<5\pi/8-\ep$ and admits a uniform asymptotic expansion (\ref{eq:Eigenvalue}) there by Proposition \ref{pro:Caliceti_1}. Put $\zeta=\be^{-2/5}=(\be^{-1/5})^{2}$ so that 
the equation leads to
\begin{equation}
\label{eq:reduced}
\om^3\zeta^6+\frac{27}{2}E(\zeta^{-5/2})\om^3\zeta
+\frac{3}{8}\om^4\zeta^8\lam^{-2}+ \frac{9}{4}b_0\om^2\zeta^4\lam^{-3}= \frac{27}{2}b_2\lam-\frac{27}{8}b_1^2\lam^{-4}
\end{equation}
and we look for a solution $\zeta(\lam)$ to \eqref{eq:reduced} verifying
\begin{equation}
\label{eq:cond}
|\arg \zeta(\lam)|<\pi/4-\ep,\quad {\mathsf{Im}}\;\om^2\zeta(\lam)^4>0
\end{equation}
where the second requirement comes from \eqref{eq:expo}. Assume that 
\begin{equation}
\label{eq:argA}
0<\arg b_2<\pi \quad \text{or}\quad -\pi \leq \arg b_2<-\pi/2
\end{equation}
and denote $27b_2 /2$ by  $A$  for notational simplicity and look for $\zeta(\lam)$ in the form
\[
\left\{\begin{array}{l}
\zeta(\lam)=e^{-\pi i/5}A^{1/6}\big(1+\lam^{-5/6}z\big)\lam^{1/6}
\quad (0<\arg A<\pi)
,\\[5pt]
\zeta(\lam)=e^{2\pi i/15}A^{1/6}\big(1+\lam^{-5/6}z\big)\lam^{1/6}\quad 
(-\pi\leq \arg A<-\pi/2).\end{array}\right.
\]
 It is clear that $\zeta(\lam)$ verifies (\ref{eq:cond}) provided that $z$ is bounded and $\lam$ is large. Note that   $E(\zeta^{-5/2})$ is analytic in $|\arg \zeta|<\pi/4-\ep$ for large $|\zeta|$ and verifies
\[
\big|E(\zeta^{
-5/2})-a_0\big|\leq C|\zeta|^{-5}
\]
with some $a_0=2k+1$, $k\in\N$ uniformly in $|\arg \zeta|<\pi/4-\ep$ when $|\zeta|\to \infty$. We insert $\zeta$ into (\ref{eq:reduced}) to get
\begin{equation}
\label{eq:A:la}
\begin{split}
A\lam(1+\lam^{-5/6}z)^6+\lam^{1/6}H(z)+d_1\lam^{-2/3}(1+\lam^{-5/6}z)^8\\
+d_2\lam^{-7/3}(1+\lam^{-5/6}z)^4=A\lam+d_3\lam^{-4},\quad d_i\in\C
\end{split}
\end{equation}
where $H(z)$ is analytic  in $|z|<B$ for $\lam\geq R$ and 
\[
|H(z)-a_0|\leq C\lam^{-5/6},\quad \lam\geq R.
\]
Note that $B$ can be chosen to be arbitrarily large taking $R$ large. Thus one can write \eqref{eq:A:la} as
\begin{equation}
\label{eq:zed}
6Az+a_0+\lam^{-5/6} F(z,\lam)=0
\end{equation}
where $F(z,\lam)$ is analytic in $|z|<B$ and bounded uniformly in  $|\lam|\geq R$. We may assume that $|a_0/6A|<B/2$ taking $R$ large as noted above. By Rouch\'e's thoerem we conclude that the equation (\ref{eq:zed}) has a solution $z(\lam)$ with $|z|<B$ for any $|\lam|\geq R_1$. Returning to $\be$ ($\zeta=\be^{-2/5}$) we conclude that (\ref{eq:10}) has 
 a solution of the form
\[
\left\{\begin{array}{l}
\beta(\lam)=iA^{-5/12}\lam^{-5/12}\big(1+\lam^{-5/6}z(\lam)\big)
\quad (0<\arg A<\pi),\\[5pt]
\beta(\lam)=e^{-\pi i/3}A^{-5/12}\lam^{-5/12}\big(1+\lam^{-5/6}z(\lam)\big) \quad 
(-\pi\leq \arg A<-\pi/2)\end{array}\right.
\]
where $|z(\lam)|<B$ for $\lam\geq R_1$. 
Plugging this $\beta(\lam)$ into \eqref{eq:a9} we get
\begin{pro}
\label{pro:sol} Assume \eqref{eq:argA}. Then there exists $\xi_0=\xi_0(\lam)$ such that
\[
\left\{\begin{array}{l}
2\xi_0=c\,\lam^{2/3}(1+\lam^{-5/6}z(\lam)),\quad {\mathsf{Im}}\;c<0,\\[4pt]
C_0(2\xi_0,\,  b_2\lam-\xi_0^2\lam^{-2}+ b_0\xi_0\lam^{-3}-b_1^2\lam^{-4}/4)=0
\end{array}\right.
\]
where $|z(\lam)|<B$ for $\lam>R$.
\end{pro}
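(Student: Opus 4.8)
The plan is to obtain $\xi_0(\lam)$ by first producing a solution $\be(\lam)$ of the scalar equation \eqref{eq:10} and then defining $\xi_0$ through the first line of \eqref{eq:a9}. The point of routing through $\be$ is that the Stokes equation \eqref{eq:Stokes}, i.e. $C_0(2\xi_0,a_3)=0$, is transcendental and cannot be attacked directly; instead Lemma \ref{lem:a1} converts every eigenvalue $E(\be)$ of the anharmonic oscillator $H(\be)=p^2+x^2+i\be x^3$ into a zero of $C_0$ at the arguments appearing in \eqref{eq:a2}. Comparing \eqref{eq:a2} with \eqref{eq:a9}, I see that if a pair $(\be,\xi_0)$ solves the system \eqref{eq:a9} with $E(\be)$ taken to be one of the analytic eigenvalue branches $E_k(\be)$ supplied by Proposition \ref{pro:Caliceti_1}, then $C_0(2\xi_0,a_3)=0$ holds automatically. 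Eliminating $\xi_0$ via the first equation of \eqref{eq:a9} reduces the whole system to the single equation \eqref{eq:10} for $\be=\be(\lam)$, to be solved as $\lam\to+\infty$ inside the sector $|\arg\be|<5\pi/8-\ep$ where $E_k$ is analytic, and subject to the boundedness requirement \eqref{eq:expo}.

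To solve \eqref{eq:10} I would first set $\zeta=\be^{-2/5}$ to clear the fractional powers, arriving at the polynomial-type equation \eqref{eq:reduced} in which the eigenvalue enters only through the bounded, nearly constant quantity $E(\zeta^{-5/2})\to a_0=2k+1$. The dominant balance as $\lam\to\infty$ is $\om^3\zeta^6\approx A\lam$ with $A=27b_2/2$, forcing $\zeta\sim(\text{const})\lam^{1/6}$; the constant's phase is the only genuine choice, and hypothesis \eqref{eq:argA} is exactly what lets me pick one of the two sixth roots so that both conditions in \eqref{eq:cond}, namely $|\arg\zeta|<\pi/4-\ep$ and $\mathsf{Im}\,\om^2\zeta^4>0$, are met. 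Writing $\zeta=e^{\mp\pi i/5}A^{1/6}\lam^{1/6}(1+\lam^{-5/6}z)$ and substituting, the equation collapses to \eqref{eq:zed}, of the form $6Az+a_0+\lam^{-5/6}F(z,\lam)=0$ with $F$ analytic in $z$ and bounded in $\lam$; a Rouch\'e (equivalently, contraction-mapping) argument then yields a bounded solution $z(\lam)$, $|z(\lam)|<B$, for all large $\lam$, hence $\be(\lam)$.

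Finally I would unwind the substitutions. Since $\be^{-8/5}=\zeta^4$, the first equation of \eqref{eq:a9} reads $2\xi_0=-\tfrac{\om^2}{3}\zeta^4$, and inserting the leading behaviour of $\zeta$ gives $2\xi_0=c\,\lam^{2/3}(1+\lam^{-5/6}z(\lam))$ after absorbing the harmless factor $(1+\lam^{-5/6}z)^4$ into a redefined bounded remainder; moreover $\mathsf{Im}(2\xi_0)=-\tfrac13\mathsf{Im}(\om^2\zeta^4)$, so the condition $\mathsf{Im}\,\om^2\zeta^4>0$ from \eqref{eq:cond} is exactly $\mathsf{Im}\,c<0$. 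The second equation of \eqref{eq:a9} holds by construction, and Lemma \ref{lem:a1} then delivers $C_0(2\xi_0,a_3)=0$ with $a_3=b_2\lam-\xi_0^2\lam^{-2}+b_0\xi_0\lam^{-3}-b_1^2\lam^{-4}/4$, which is the assertion. I expect the main obstacle to be the bookkeeping of branches and sectors: one must fix the branches $\be^{\pm1/5}$ with $|\arg\be^{\pm1/5}|<\pi/10$, keep $\zeta(\lam)$ inside $|\arg\zeta|<\pi/4-\ep$ so that Proposition \ref{pro:Caliceti_1} applies to $E(\zeta^{-5/2})$, and simultaneously respect the single sign condition $\mathsf{Im}\,\om^2\zeta^4>0$ through all the fractional-power manipulations. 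Once the correct phase is identified under \eqref{eq:argA}, the Rouch\'e step and the algebraic unwinding are routine.
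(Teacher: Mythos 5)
Your proposal follows essentially the same route as the paper: eliminate the Stokes equation via Lemma \ref{lem:a1}, substitute $\zeta=\be^{-2/5}$ to reach \eqref{eq:reduced}, balance $\om^3\zeta^6\approx A\lam$, perturb by $1+\lam^{-5/6}z$, and close with Rouch\'e, then unwind to get $2\xi_0=-\tfrac{\om^2}{3}\zeta^4$. The only slip is the phase in your second case: the admissible phases are $-\pi/5+k\pi/3$ (so that $\om^3 e^{6i\theta}=1$), and for $-\pi\leq\arg A<-\pi/2$ the correct choice compatible with \eqref{eq:cond} is $e^{2\pi i/15}$, not $e^{+\pi i/5}$.
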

If we write 
\begin{equation}
\label{eq:a:b:teigi}
2\xi_0=\lam^{2/3}a(\lam),\quad b_2\lam-\xi_0^2\lam^{-2}+ b_0\xi_0\lam^{-3}-b_1^2\lam^{-4}/4=\lam b(\lam)
\end{equation}
 it is clear that $|a(\lam)|$,  $|b(\lam)|<M$ with some $M>0$ for $\lam\geq R_1$.

%%%%%%%%%%%%%%%%%%%%%%%%%%%%%%%%%%%%%%%
\subsection{Asymptotics of ${\mathcal Y}_0(x;a\lam^{2/3},b\lam )$ for large $|x|$ and $\lam$ }

Recalling \eqref{eq:a:b:teigi} we study  how ${\mathcal Y}_0(x;a\lam^{2/3},b\lam )$ behaves for large $|x|$ and large $\lam$ where $|a|, |b| \leq M$ is assumed. 
In what follows $f=o_{a}(1)$ means that there are positive constants $C_{a}>0$ and $\delta_a>0$ such that
\[
|f|\leq C_a\lam^{-\delta_a}, \quad \lam\to+\infty.
\]
We make the asymptotic representation \eqref{eq:zenkin:a} slightly precise. 
\begin{pro}
\label{pro:Esti:Y}
Let $\rho>1/3$ be given. Then one can write
\begin{gather*}
{\mathcal Y}_0(x;a\lam^{2/3},b\lambda)
\simeq (1+p_{\rho}(x,\lambda))e^{R_{\rho}(x,\lambda)}x^{-3/4}\exp{\{-E_{\rho}(x; a, b, \lam)\}},\\
{\mathcal Y}_0'(x;a\lam^{2/3},b\lambda)
\simeq (-1+p_{\rho}(x,\lambda))e^{R_{\rho}(x,\lambda)}x^{3/4}\exp{\{-E_{\rho}(x; a, b,  \lam)\}}
\end{gather*}
as $x$ tends to infinity in any closed subsector of
\[
S_{\lam}=\{x;|\arg x|<3\pi/5, |x|> \lambda^{\rho}\}
\]
where  
\[
E_{\rho}(x; a, b, \lam)=\frac{2}{5}x^{5/2}+a\lam^{2/3} x^{1/2}-b\lam x^{-1/2}+r_{\rho}(x,\lam)
\]
and $r_{\rho}$ is a polynomial in $x^{-1/2}$ such that
\[
|r_{\rho}(x,\lam)|=C\lam^{4/3-3\rho/2}(1+o_{\rho}(1)),\quad  x\in S_{\lam}
\]
and $p_{\rho}(x,\lambda)$, $R_{\rho}(x,\lambda)$ are holomorphic in $S_{\lam}$ and in any closed subsector of $S_{\lam}$
\begin{gather*}
|p_{\rho}(x,\lambda)|\leq C_{\rho}\lambda^{-2(\rho-1/3)},\quad |R_{\rho}(x,\lambda)|\leq C_{\rho}\lam^{-1},\quad x\in S_{\lam}
\end{gather*}
and $R_{\rho}(x,\lam)\to 0$, $p_{\rho}(x,\lambda)\to 0$  as $|x|\to\infty$, $x\in S_{\lam}$.
\end{pro}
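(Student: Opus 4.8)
The plan is to refine the known asymptotic expansion \eqref{eq:zenkin:a} by carefully tracking the $\lambda$-dependence of the coefficients once $a_2=a\lam^{2/3}$ and $a_3=b\lam$ are scaled as in \eqref{eq:a:b:teigi}. The starting point is that $\mathcal{Y}_0(x;a_2,a_3)$ is determined by the classical WKB/Liouville--Green construction of \cite{Si}: the leading exponential is governed by $E(x,a)=\frac{2}{5}x^{5/2}+a_2 x^{1/2}$ and the subleading structure by the formal series $x^{-3/4}[1+\sum_N B_N x^{-N/2}]$. My first step would be to substitute the scaled arguments and reorganize the phase $E$ so that the genuinely large contributions ($\frac{2}{5}x^{5/2}$, the $a\lam^{2/3}x^{1/2}$ term, and a $-b\lam x^{-1/2}$ term coming from $a_3=b\lam$) are collected into the modified phase $E_\rho$, while the remaining tail is absorbed into the polynomial correction $r_\rho(x,\lam)$. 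The restriction $|x|>\lam^\rho$ with $\rho>1/3$ is exactly what makes the scaled parameters subordinate to the dominant balance: since $a_2\sim\lam^{2/3}$ and $x\geq\lam^\rho$, the ratio $a_2 x^{-2}$ and similar combinations carry negative powers of $\lambda$, and one wants to quantify these decay rates precisely to obtain the stated bounds $|p_\rho|\leq C_\rho\lam^{-2(\rho-1/3)}$ and $|R_\rho|\leq C_\rho\lam^{-1}$.

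Concretely, I would proceed as follows. First, write the exact second-order equation \eqref{eq:aa1} with the scaled coefficients and perform the Liouville transformation $t=\int^x\sqrt{s^3+a_2 s+a_3}\,ds$ (equivalently work directly from the asymptotic series in \cite{Si}), tracking how each coefficient $B_N$, which is a polynomial in $(a_2,a_3)$, scales under $a_2=a\lam^{2/3}$, $a_3=b\lam$. Second, identify which terms of the formal phase expansion are \emph{not} uniformly small on $S_\lam$: the integral of $\sqrt{s^3+a_2s+a_3}$ expanded in descending powers of $s$ produces $\frac{2}{5}x^{5/2}$, a term proportional to $a_2 x^{1/2}$, a term proportional to $a_3 x^{-1/2}$, and then a tail of strictly more negative powers; collecting the first three into $E_\rho$ and the tail into $r_\rho$ gives the claimed form, and a direct power count of the worst surviving monomial against the constraint $|x|>\lam^\rho$ yields the estimate $|r_\rho|\leq C\lam^{4/3-3\rho/2}(1+o_\rho(1))$. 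Third, the prefactor correction factors as $(s^3+a_2s+a_3)^{-1/4}=x^{-3/4}(1+a_2x^{-2}+a_3x^{-3})^{-1/4}$, whose deviation from $x^{-3/4}$ is $O(a_2 x^{-2})=O(\lam^{2/3-2\rho})=O(\lam^{-2(\rho-1/3)})$; this is the source of $p_\rho$ and explains the exponent. Fourth, the holomorphic error $R_\rho$ collecting the higher-order transport/amplitude corrections is $O(\lam^{-1})$ by the same counting. The companion expansion for $\mathcal{Y}_0'$ follows by differentiating the representation, the dominant balance producing the leading factor $x^{3/4}$ with sign $-1$ from $-\frac{d}{dx}E_\rho\sim -x^{3/2}$.

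I expect the main obstacle to be uniformity: the asymptotic statements in \cite{Si} are for \emph{fixed} $a$ as $x\to\infty$, whereas here $a$ itself grows with $\lambda$, so I must ensure that the error bounds in the Liouville--Green construction hold uniformly in $\lambda$ on $S_\lam$ rather than merely for each fixed $\lambda$. The key to overcoming this is precisely the threshold $|x|>\lam^\rho$ with $\rho>1/3$, which keeps the perturbation $a_2 s+a_3$ dominated by $s^3$ throughout the region of validity, so that the standard error-control integrals (total variation of the amplitude) converge with an explicit negative power of $\lambda$. I would verify that these error integrals, rescaled, are bounded by the stated powers of $\lambda$ uniformly on any closed subsector, and check that the holomorphy of $p_\rho$ and $R_\rho$ in $S_\lam$ is preserved since every manipulation is through convergent expansions in the sector $|\arg x|<3\pi/5$ where $s^3+a_2s+a_3$ stays away from its zeros. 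The decay $R_\rho,p_\rho\to 0$ as $|x|\to\infty$ is then immediate from the negative powers of $x$ appearing in each corrective factor.
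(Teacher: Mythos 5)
Your overall strategy is the right one and is essentially what the paper does (the printed proof is only a pointer to Sibuya \cite[Chapter 2]{Si} and \cite[Proposition 2.3]{BN:JHDE}): rerun the WKB/Sibuya construction for \eqref{eq:aa1} with the scaled coefficients $a_2=a\lam^{2/3}$, $a_3=b\lam$, and use $|x|>\lam^{\rho}$, $\rho>1/3$, to keep the perturbation subordinate to $x^3$ (equivalently, to stay away from the turning points $|x|\sim\lam^{1/3}$), so that the error controls become uniform in $\lam$. The power counts for the pieces you do identify are correct: the worst eikonal tail term $\tfrac{1}{12}a_2^2x^{-3/2}$ gives $|r_\rho|\sim\lam^{4/3-3\rho/2}$, and the prefactor deviation $\tfrac{1}{4}a_2x^{-2}$ gives $|p_\rho|\leq C\lam^{2/3-2\rho}=C\lam^{-2(\rho-1/3)}$.

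The gap is in your fourth step, the claim that $R_\rho=O(\lam^{-1})$ ``by the same counting'' when $R_\rho$ is taken to be the whole transport/amplitude correction beyond the eikonal. The very first transport correction (the Liouville--Green error-control integral $\int f^{-1/4}(f^{-1/4})''\,ds$ with $f=s^3+a_2s+a_3$) already contributes a term $\sim x^{-5/2}$, which on $|x|>\lam^{\rho}$ is only $O(\lam^{-5\rho/2})$; for $1/3<\rho<2/5$ this exceeds $\lam^{-1}$, and there are similar offenders such as $a_2x^{-9/2}=O(\lam^{2/3-9\rho/2})$. So with your decomposition the stated bound $|R_\rho|\leq C_\rho\lam^{-1}$ fails for $\rho$ near $1/3$. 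The remedy is the one the paper (following Sibuya) uses: solve the associated Riccati equation for the logarithmic derivative by a formal series in the three small quantities $x^{-5/2}$, $a_2x^{-2}$, $a_3x^{-3}$, truncate at an order $N_1=N_1(\rho)$ so large that every discarded monomial is $O(\lam^{-1})$ --- possible precisely because each monomial gains at least a factor $\lam^{-(2\rho-2/3)}$ per degree, with negative exponent for $\rho>1/3$ --- and put \emph{all} retained half-integer-power terms, eikonal and transport alike, into the polynomial $r_\rho$, reserving $R_\rho$ for the genuine remainder of the resulting integral equation. These extra terms do not spoil $|r_\rho|=C\lam^{4/3-3\rho/2}(1+o_\rho(1))$, being of strictly lower order. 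A second, smaller omission: after constructing a recessive solution $y(x,\lam)$ with these uniform asymptotics you must still identify it with the specific normalized solution ${\mathcal Y}_0$, i.e.\ show that the proportionality constant equals $1$; this follows by comparing both against $x^{-3/4}e^{-\frac{2}{5}x^{5/2}-a_2x^{1/2}}$ as $x\to+\infty$ for each fixed $\lam$, using that subdominant solutions in $|\arg x|<\pi/5$ are unique up to a scalar.
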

\begin{proof} 
We follow Sibuya \cite[Chapter 2]{Si} and \cite[Proposition 2.3]{BN:JHDE} with needed modifications. 
\end{proof}
%%%%%%%%%%
\if0
%%%%%%%%
\begin{proof} 
We follow Sibuya \cite{Si} and \cite{BN2} with needed modifications. 
Let  $y(x,\lambda)$ satisfy
\begin{equation}
\label{eqn:1}
y''(x,\lambda)=(x^3+a\lam^{2/3} x+b\lambda)y(x,\lambda)
\end{equation}
where $|a|,|b|\leq M$. 
Set
\[
x=\xi^2,\quad \left[\begin{array}{l}
y\\
y'\end{array}\right]=\left[\begin{array}{cc}
1&0\\
0&\xi^3\end{array}\right]v
\]
then $v$ satisfies 
\[
\frac{dv}{d\xi}=\xi^4\left[\begin{array}{cc}
0&2\\
2+2a\lam^{2/3}\xi^{-4}+2b\lambda\xi^{-6}&-3\xi^{-5}\end{array}\right]v.
\]
Let us set again
\[
v=\left[\begin{array}{cc}
1&1\\
-1&1\end{array}\right]w
\]
so that $w$ verifies
\[
\frac{dw}{d\xi}=\xi^4\left[\begin{array}{cc}
\alpha_1(\xi)&\beta_1(\xi)\\
\beta_2(\xi)&\alpha_2(\xi)\end{array}\right]w
\]
where
\[
\left\{
\begin{array}{ll}
\alpha_1(\xi)=-2-a\lam^{2/3}\xi^{-4}-\frac{3}{2}\xi^{-5}-b\lambda\xi^{-6},&\beta_1(\xi)=-a\lam^{2/3}\xi^{-4}+\frac{3}{2}\xi^{-5}-b\lambda\xi^{-6}
,\\
\alpha_2(\xi)=2+a\lam^{2/3}\xi^{-4}-\frac{3}{2}\xi^{-5}+b\lambda\xi^{-6},&
\beta_2(\xi)=a\lam^{2/3}\xi^{-4}+\frac{3}{2}\xi^{-5}+b\lambda\xi^{-6}.
\end{array}\right.
\]
Let us set 
\[
w=\left[\begin{array}{ll}
1\\
p\end{array}\right]e^{\int^{\xi}\eta^4\gamma(\eta)d\eta}
\]
where now $p$ and $\gamma$ are unknowns. It is clear that
\[
\left\{\begin{array}{ll}
\gamma(\xi)=\alpha_1(\xi)+\beta_1(\xi)p,\\
\displaystyle{\frac{dp}{d\xi}=\xi^4(\beta_2(\xi)+(\alpha_2(\xi)-\gamma(\xi))p)}\end{array}\right.
\]
which gives
\[
\frac{dp}{d\xi}=\xi^4(\beta_2+(\alpha_2-\alpha_1)p-\beta_1p^2).
\]
Denoting
\[
\left\{
\begin{array}{lll}
f(\xi)=\be_2(\xi)=a\lam^{2/3}\xi^{-4}+\frac{3}{2}\xi^{-5}+b\lambda\xi^{-6},\\[3pt]
h(\xi)=\al_2(\xi)-\al_1(\xi)=4+2a\lam^{2/3}\xi^{-4}+2b\lambda \xi^{-6},\\[3pt]
g(\xi)=-\be_1(\xi)=a\lam^{2/3}\xi^{-4}-\frac{3}{2}\xi^{-5}+b\lambda\xi^{-6}
\end{array}\right.
\]
one can write
\begin{equation}
\label{eqn:2}
\frac{dp}{d\xi}=\xi^4\bigl[f(\xi)+h(\xi)p+g(\xi)p^2\bigr].
\end{equation}
Putting 
\[
\theta=\xi^{-5},\quad \phi=a\lam^{2/3}\xi^{-4},\quad \psi=b\lambda\xi^{-6}
\]
such that $f(\xi)=\phi+\frac{3}{2}\theta+\psi$, $h(\xi)=4+2\phi+2\psi$, $g(\xi)=\phi-\frac{3}{2}\theta+\psi$ we look for a formal solution to (\ref{eqn:2}) of the form
\[
{\hat p}=\sum_{k+\ell+m\geq 1}C_{k,\ell,m}\theta^k\phi^{\ell}\psi^m.
\]
Since 
\[
\xi^{-4}\frac{d}{d\xi}(\theta^k\phi^{\ell}\psi^m)=-(5k+4\ell+6m)\theta^{k+1}\phi^{\ell}\psi^m
\]
it is easy to see that $C_{k,\ell,m}$ are uniquely determined by 
\begin{eqnarray*}
C_{100}=-3/8,\;\;C_{010}=-1/4,\;\;C_{001}=-1/4,\\
4C_{k,\ell,m}=F_j(C_{k,\ell,m}; k+\ell+m \leq j-1),\;
 k+\ell+m=j,\;\; j\geq 2.
\end{eqnarray*}
Let us take
\[
{\hat p}=\sum_{1\leq k+\ell+m<N}C_{k,\ell,m}\theta^k\phi^{\ell}\psi^m
\]
so that 
\[
f(\xi)+h(\xi){\hat p}+g(\xi){\hat p}^2-\xi^{-4}\frac{d}{d\xi}{\hat p}=O(|\theta|+|\phi|+|\psi|)^N.
\]
Look for a solution to (\ref{eqn:2}) in the form $p=q+{\hat p}$ then $q$ verifies 
\[
\frac{dq}{d\xi}=\xi^4\bigl[\bigl(f+h{\hat p}+g{\hat p}^2-\xi^{-4}\frac{d{\hat p}}{d\xi}\bigr)+(h+2g{\hat p})q+gq^2\bigr].
\]
Let us set
\begin{eqnarray*}
\mu(\xi)=f(\xi)+h(\xi){\hat p}(\xi)+g(\xi){\hat p}^2(\xi)-\xi^{-4}\frac{d{\hat p}}{d\xi},\\
\lambda(\xi)=h(\xi)+2g(\xi){\hat p}(\xi),\quad
\nu(\xi)=g(\xi)
\end{eqnarray*}
so that we have
\begin{equation}
\label{eqn:3}
\frac{dq}{d\xi}=\xi^4\bigl[\mu(\xi)+\lambda(\xi)q+\nu(\xi)q^2\bigr].
\end{equation}
Note that
\begin{equation}
\label{eq:teigi:mu}
\left\{
\begin{array}{lll}
\mu(\xi)=O(|\theta|+|\phi|+|\psi|)^N,\\[3pt]
\lambda(\xi)-4=2\phi+2\psi+2\nu(\xi){\hat p}(\xi),\\[3pt]
\nu(\xi)=\phi-\frac{3}{2}\theta+\psi.
\end{array}\right.
\end{equation}
One can rewrite (\ref{eqn:3}) as an integral equation;
\begin{equation}
\label{eqn:4}
q(\xi)=\int_{\infty}^{\xi}\eta^4\bigl[\mu(\eta)+{\tilde \lambda}(\eta)q(\eta)+\nu(\eta)q(\eta)^2\bigr]\exp{\frac{4(\xi^5-\eta^5)}{5}}d\eta
\end{equation}
where ${\tilde \lambda}(\xi)=\lambda(\xi)-4$. With $s=\frac{4}{5}\xi^5$, $\sigma=\frac{4}{5}\eta^5$ one has
\[
q(\xi)=\frac{1}{4}\int_{\infty}^s\bigl[\mu(\eta)+{\tilde \lambda}(\eta)q(\eta)+\nu(\eta)q(\eta)^2\bigr]e^{s-\sigma}d\sigma.
\]
Let $\rho>1/6$ and note that 
\[
|\psi|=|b\lambda\xi^{-6}|\leq M|\xi|^{-(6-1/\rho)},\quad |\phi|=|a\lam^{2/3}\xi^{-4}|\leq M|\xi|^{-2(6-1/\rho)/3}
\]
provided $
|\xi|\geq \lambda^{\rho}$
and, taking $N$ large so that $2(N-1)(6-1/\rho)/3\geq 1$  one has
\begin{gather*}
|\mu(\xi)|\leq C|\xi|^{-1-2(6-1/\rho)/3},\; |{\tilde \lambda}(\xi)|\leq C|\xi|^{-2(6-1/\rho)/3},\\
 |\nu(\xi)|\leq C|\xi|^{-2(6-1/\rho)/3},\;|{\hat p}(\xi)|\leq C|\xi|^{-2(6-1/\rho)/3}.
\end{gather*}
We now repeat the same argument as in Section 10 of \cite{Si} 
in $ {\mathcal S}_{\delta,M}$ (for the notation ${\mathcal S}_{\de,M}$, 
see \cite{Si} and here $M$ depends on $\lam$) then we get a solution $q(\xi)$ to (\ref{eqn:4}) 
such that $|q(\xi)|\leq C|\xi|^{-1}$ for $|\xi|\geq \lam^{\rho}$. Since we can choose arbitrarily large $N=N(\rho)$ in \eqref{eq:teigi:mu} we conclude that
\begin{equation}
\label{eqn:6}
|q(\xi)|\leq C|\xi|^{-N},\quad |\xi|\geq \lam^{\rho}.
\end{equation}
 Thus we obatain a solution $p(\xi)=q(\xi)+{\hat p}(\xi)$. We turn back to $v$;
\[
v=\left[\begin{array}{cc}
1&1\\
-1&1\end{array}\right]\left[\begin{array}{c}
1\\
p(\xi)\end{array}\right]e^{\int^{\xi}\eta^4\gamma(\eta)d\eta}.
\]
Consider
\[
\eta^4\gamma(\eta)=-\{2\eta^4+a\lam^{2/3}+\frac{3}{2}\eta^{-1}+b\lambda\eta^{-2}+(a\lam^{2/3}-\frac{3}{2}\eta^{-1}+b\lambda\eta^{-2})p(\eta)\}
\]
and hence
\begin{eqnarray*}
\int^{\xi}\eta^4\gamma(\eta)d\eta=-\{\frac{2}{5}\xi^5+a\lam^{2/3}\xi+\frac{3}{2}\log\xi-b\lambda\xi^{-1}\}\\
-\int^{\xi}(a\lam^{2/3}-\frac{3}{2}\eta^{-1}+b\lambda\eta^{-2})p(\eta)d\eta.
\end{eqnarray*}
Denoting
\[
{\hat p}=\sum_{1\leq k+\ell+m<N_1-1}C_{k,\ell,m}\theta^k\phi^{\ell}\psi^m+\sum_{N_1\leq k+\ell+m<N}C_{k,\ell,m}\theta^k\phi^{\ell}\psi^m={\hat p}_1+{\hat p}_2
\]
we define
\begin{gather*}
\int^{\xi}(a\lam^{2/3}-3\eta^{-1}/2+b\lam\eta^{-2}){\hat p}_j(\eta)d\eta=r_j(\xi, \lam)
\end{gather*}
which are polynomials in $\xi^{-1}$. Since 
\[
\Big|\int^{\xi}\lam^i\eta^{-j}\theta^k\phi^{\ell}\psi^md\eta\Big|\leq C\lam^{i+\rho(1-j)-(4\rho-2/3)(k+\ell+m)},\quad |\xi|\geq \lam^{\rho}
\]
 it is also easy to see that
\begin{equation}
\label{eq:hyoka:r:bis}
\begin{split}
&|r_1(\xi, \lam)|\leq C\lam^{4/3-3\rho}(1+o_{\rho}(\lam)),\\
&|r_2(\xi, \lam)|\leq C_{N_1,N}\lam^{2/3+\rho-(4\rho-2/3)N_1},\quad |\xi|\geq \lam^{\rho}.
\end{split}
\end{equation}
On the other hand, taking $N=N(\ep)$ large, we see that
\[
r_3(\xi,\lambda)=-\int^{\xi}(a\lam^{2/3}-\frac{3}{2}\eta^{-1}+b\lambda\eta^{-2})q(\eta)d\eta
\]
is holomorphic in $|{\rm arg} \xi|<3\pi/10$ and $|r_3(\xi,\lambda)|\leq C_N |\xi|^{-N}$ in any closed subsector of $| \arg \xi |<3\pi /10$ if $|\xi|\geq \lambda^{\rho}$. 
It is also clear that for fixed $\lambda$ 
\[
E(\xi, \lam)=r_2(\xi, \lam)+r_3(\xi,\lambda)\to 0,\quad \xi\to\infty,\quad |\arg \xi|<3\pi/10.
\]
Note that
\[
|p(\xi, \lam)|\leq C_{\rho}\lam^{-4(\rho-1/6)},\quad |\xi|\geq \lam^{\rho}.
\]
Then one can write, recalling $x=\xi^2$
\begin{eqnarray*}
y(x,\lambda)=(1+p(\sqrt{x},\lambda))e^{E(\sqrt{x},\lambda)}x^{-3/4}\\
\times\exp{\big\{-\bigl(\frac{2}{5}x^{5/2}+a\lam^{2/3} x^{1/2}-b\lam x^{-1/2}+r_1(\sqrt{x},\lam)\bigr)\big\}},\\
y'(x,\lambda)=(-1+p(\sqrt{x},\lambda))e^{E(\sqrt{x},\lambda)}x^{3/4}\\
\times\exp{\big\{-\bigl(\frac{2}{5}x^{5/2}+a\lam^{2/3} x^{1/2}-b\lam x^{-1/2}+r_1(\sqrt{x},\lam)\bigr)\big\}}
\end{eqnarray*}
for $
|\arg x|<\frac{3\pi}{5}$, $|x|\geq |\lambda|^{2\rho}$ 
where $p(\sqrt{x},\lambda)\to 0$ as $x \to \infty$ in any closed subsector of $|\arg x |<3\pi/5$ and $|p(\sqrt{x},\lambda)|\leq C|\lambda|^{-4(\rho-1/6)}$ for $|x|\geq \lambda^{2\rho}$. 
Since $y(x,\lambda)$ is subdominant in the sector $|\arg x|<\pi/5$ one can write ${\mathcal Y}_0(x;a\lam^{2/3},b\lambda)=c(a,b,\lambda)y(x,\lambda)$. Fixing $a$, $b$, $\lambda$ and letting $x\to\infty$ in the equality
\[
\Bigl[{\mathcal Y}_0(x;a\lam^{2/3},b\lambda)-c(a,b,\lambda)y(x,\lambda)\Bigr]x^{3/4}\exp{\big\{\bigl(\frac{2}{5}x^{5/2}+a\lam^{2/3} x^{1/2}\bigr)\big\}}=0
\]
 we see that $c(a,b,\lambda)=1$. Replacing $2\rho$ by $\rho$ we conclude the assertion. 
 \end{proof}
 \fi
 %%%%%%%%
%%%%%%%%%%%%%%%%%%%%%%%%%%%%
%
\begin{lemma}
\label{lem:1:ten} Assume that ${\mathcal Y}_0(x;a\lam^{2/3},b\lam)$ verifies
\begin{equation}
\label{eq:rela}
{\mathcal Y}_0(x;a\lam^{2/3},b\lam)=-\om {\mathcal Y}_2(x;a\lam^{2/3},b\lam)
\end{equation}
and 
\begin{equation}
\label{eq:Im:a}
 {\mathsf{Im}}\,a= -\delta (1+o_a(1))
 \end{equation}
with some $\delta>0$. Let $X>0$. There exist $\ell$, $c>0$, $C>0$ such that for any $0\leq \mu<5/6$ we have
\begin{gather*}
\big|(d/dx)^k{\mathcal Y}_0(\lam^{2} x,\; a\lam^{2/3},b\lam)\big|\leq C(1+o_{\mu}(1))\lam^{\ell}\\
\times\exp{\big\{-\delta \lam^{5/3}|x|^{1/2}(1+o_{a}(1))+C_{\mu}(\lam^{\mu}+\lam^{-5/3+3\mu})\big\}}
\end{gather*}
for $k=0, 1$ and $|x|\geq \lam^{-2\mu}X$.
\end{lemma}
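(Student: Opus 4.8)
The plan is to read the bound off the refined asymptotics of Proposition \ref{pro:Esti:Y}, evaluated at the argument $\lambda^2x$, after splitting into the cases $x>0$ and $x<0$. The first step is to tie the exponent $\rho$ of that proposition to $\mu$. Since $|x|\ge\lambda^{-2\mu}X$ gives $|\lambda^2x|\ge\lambda^{2-2\mu}X$, I take $\rho$ equal to (or, to stay strictly inside $S_\lambda$, arbitrarily close to and below) $2-2\mu$, so that $\lambda^2x$ lies in a closed subsector of $S_\lambda=\{|\arg x|<3\pi/5,\ |x|>\lambda^\rho\}$ for large $\lambda$. The inequality $\mu<5/6$ is exactly the condition $2-2\mu>1/3$, i.e.\ the admissibility $\rho>1/3$ demanded by Proposition \ref{pro:Esti:Y}; this is the structural reason the estimate holds precisely in the range $0\le\mu<5/6$. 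With this choice the remainder obeys $|r_\rho|\le C\lambda^{4/3-3\rho/2}=C_\mu\lambda^{-5/3+3\mu}$ (the fixed constant $X$ being absorbed into $C_\mu$), which already produces the $\lambda^{-5/3+3\mu}$ term in the exponent.

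In the case $x>0$ I insert $\lambda^2x$ directly into Proposition \ref{pro:Esti:Y}. The leading exponent $\tfrac25(\lambda^2x)^{5/2}=\tfrac25\lambda^5x^{5/2}$ is real and positive, so $|{\mathcal Y}_0|$ is bounded by $\exp\{-\tfrac25\lambda^5x^{5/2}+\cdots\}$; for $x\ge\lambda^{-2\mu}X$ and $\mu<5/6$ this term dominates all others and is far stronger than the asserted bound, so this case is immediate for both $k=0,1$.

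The substantial case is $x<0$, where I use \eqref{eq:rela} to write ${\mathcal Y}_0(\lambda^2x;a\lambda^{2/3},b\lambda)=-\omega\,{\mathcal Y}_0(\omega^{-2}\lambda^2x;\omega^{-4}a\lambda^{2/3},\omega^{-6}b\lambda)$ and apply Proposition \ref{pro:Esti:Y} at $Y=\omega^{-2}\lambda^2x$; for $x<0$ this $Y$ has argument $\pi/5$ and modulus $\lambda^2|x|\ge\lambda^\rho$, so it lies in $S_\lambda$. The core computation is the real part of $-E_\rho(Y;\cdots)$, evaluated through the branch identities \eqref{eq:heiho}. The dominant term $\tfrac25Y^{5/2}=\tfrac25 i\lambda^5|x|^{5/2}$ is purely imaginary, contributing only oscillation; the term $\omega^{-4}a\lambda^{2/3}Y^{1/2}=ia\lambda^{5/3}|x|^{1/2}$ contributes to $\mathsf{Re}(-E_\rho)$ exactly $\mathsf{Im}\,a\,\lambda^{5/3}|x|^{1/2}=-\delta(1+o_a(1))\lambda^{5/3}|x|^{1/2}$ by \eqref{eq:Im:a}, which is the claimed decay; the term $-\omega^{-6}b\lambda\,Y^{-1/2}=ib|x|^{-1/2}$ has modulus $\le|b|\lambda^\mu X^{-1/2}$, giving the $C_\mu\lambda^\mu$ term; and $|r_\rho(Y,\lambda)|\le C_\mu\lambda^{-5/3+3\mu}$ from the first paragraph.

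It remains to collect the algebraic prefactors: $(1+p_\rho)e^{R_\rho}=1+o(1)$ by the bounds of Proposition \ref{pro:Esti:Y}; the factor $Y^{\mp3/4}$ gives $\lambda^{\mp3/2}|x|^{\mp3/4}$; and for $k=1$ the chain rule yields an extra $\lambda^2$. All $\lambda$-powers go into $\lambda^\ell$, while the $|x|$-powers are absorbed into the exponential decay, the supremum over $|x|\ge\lambda^{-2\mu}X$ of $|x|^{\pm3/4}\exp\{-\delta\lambda^{5/3}|x|^{1/2}\}$ costing only a reduction of $\delta$ by a factor $1+o_a(1)$ — precisely the form appearing in the statement. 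The main obstacle is the bookkeeping in the third paragraph: one must carry the fifth-root branches correctly through ${\mathcal Y}_2(x)={\mathcal Y}_0(\omega^{-2}x;\cdots)$ and verify via \eqref{eq:heiho} that the quintic term is purely imaginary while the half-power term carries exactly the sign $\mathsf{Im}\,a<0$; any slip here would lose the decay. The secondary subtlety, pinning the exponent $-5/3+3\mu$ (rather than $-5/3+3\mu+\epsilon$), forces the sharp choice $\rho=2-2\mu$ and uses $\mu<5/6$ at the boundary.
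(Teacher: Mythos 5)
Your proposal is correct and follows essentially the same route as the paper: the author likewise sets $\rho=2(1-\mu)$ in Proposition \ref{pro:Esti:Y}, uses \eqref{eq:rela} to rewrite ${\mathcal Y}_0(\lam^2x;\cdot)$ as $-\om{\mathcal Y}_0(e^{\pi i/5}\lam^2|x|;\om a\lam^{2/3},\om^{-1}b\lam)$ for $x<0$, computes $-{\mathsf{Re}}\,\phi^{-}=\mathsf{Im}(a)\lam^{5/3}|x|^{1/2}+O(\lam^{\mu}+\lam^{-5/3+3\mu})$ exactly as you do, and disposes of $x>0$ by the dominance of the $-c\lam^5x^{5/2}$ term. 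Your extra remarks on absorbing the $x^{\pm3/4}$ prefactors and the $k=1$ chain-rule factor are details the paper leaves implicit, but they are consistent with its argument.
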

\begin{proof} In Proposition \ref{pro:Esti:Y} choose $\rho=2(1-\mu)$ and estimate ${\mathcal Y_0}$ in $x\leq -\lam^{-2\mu}X$ first. Recall that for $x<0$ we have
\begin{gather*}
{\mathcal Y}_0(x; a\lam^{2/3},b\lam)=-\omega{\mathcal Y_2}(x; a\lam^{2/3}, b\lam)
=-\om {\mathcal Y}_0(e^{\pi i/5}|x|;\om a\lam^{2/3},\om^{-1}b\lam).
\end{gather*}
Denote
\[
\phi^{-}(x,\lam)=E_{\rho}(e^{\pi i/5}\lam^2|x|; \omega a, \omega^{-1}b, \lam),\quad\phi^{+}(x,\lam)=E_{\rho}(\lam^2 x; \omega a, \omega^{-1}b, \lam)
\]
then we have
\begin{equation}
\label{eq:phi:x:fu}
\begin{split}
\phi^{-}(x,\lam)=\frac{2}{5}i\lam^5 |x|^{5/2}+i a\lam^{5/3}|x|^{1/2}
+ib|x|^{-1/2}+r_{\rho}(e^{\pi i/5}\lam^{2}|x|,\lam).
\end{split}
\end{equation}
Since $\big|r_{\rho}(e^{\pi i/5}\lam^{2}|x|,\lam)\big|\leq C_{\rho}\lam^{-5/3+3\mu}$ for $|x|\geq \lam^{-2\mu}X$ and then
\begin{equation}
\label{eq:x:fu}
-{\mathsf{Re}}\,\phi^{-}(x,\lam)
\leq -\delta\lam^{5/3}|x|^{1/2}(1+o_{a}(1))+C_{\rho}(\lam^{\mu}+\lam^{-5/3+3\mu}).
\end{equation}
  For $x\geq \lam^{-2\mu}X>0$ note that
\begin{equation}
\label{eq:x:sei}
\begin{split}
-{\mathsf{Re}}\;\phi^{+}(x,\lam)\leq -c\lam^5 x^{5/2}+C\lam^{5/3}x^{1/2}
+C_{\rho}(\lam^{\mu}+\lam^{-5/3+3\mu})\\
=-c\lam^{5-4\mu} x^{1/2}(1+o_{a}(1))+C_{\rho}(\lam^{\mu}+\lam^{-5/3+3\mu})
\end{split}
\end{equation}
for $\mu<5/6$. Then the assertion follows from \eqref{eq:x:fu} and \eqref{eq:x:sei}. 
\end{proof}
%
%%%%%%%%%%%%
\begin{lemma}
\label{lem:X:1:2}Assume that \eqref{eq:rela} and \eqref{eq:Im:a} hold with some $\delta>0$ and that $\mu< 5/6$. Let $0<X_1<X_2<1$. Then there exist $C>0$, $\ell$ and $c>0$ such that 
\[
\big|{\mathcal Y}_0(\lam^{2} x; a\lam^{2/3}, b\lam)\big|\geq C\lam^{\ell} e^{-c\lam^{5/3-\mu}},\quad \lam^{-2\mu}X_1\leq -x\leq \lam^{-2\mu}X_2.
\]
\end{lemma}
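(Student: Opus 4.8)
The plan is to run the argument of Lemma \ref{lem:1:ten} in reverse on the thin range $\lambda^{-2\mu}X_1\le -x\le \lambda^{-2\mu}X_2$, extracting a lower bound for $|{\mathcal Y}_0|$ instead of an upper bound. Since $x<0$ throughout, I would first invoke the connection relation used in the proof of Lemma \ref{lem:1:ten}, namely
\[
{\mathcal Y}_0(\lam^2 x; a\lam^{2/3}, b\lam)=-\om\,{\mathcal Y}_0(e^{\pi i/5}\lam^2|x|;\om a\lam^{2/3}, \om^{-1}b\lam),
\]
so that the argument $\tilde x=e^{\pi i/5}\lam^2|x|$ lies in the sector $|\arg \tilde x|=\pi/5<3\pi/5$ where Proposition \ref{pro:Esti:Y} applies. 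Feeding that proposition into the relation gives, for large $\lam$,
\[
|{\mathcal Y}_0(\lam^2 x; a\lam^{2/3}, b\lam)|=|1+p_{\rho}|\,|e^{R_{\rho}}|\,(\lam^2|x|)^{-3/4}\,e^{-{\mathsf{Re}}\,\phi^{-}(x,\lam)},
\]
with $\phi^{-}$ as in \eqref{eq:phi:x:fu}, and the task reduces to bounding the three factors on the right from below.

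To make Proposition \ref{pro:Esti:Y} available I need $\tilde x\in S_{\lam}$, i.e. $\lam^2|x|>\lam^{\rho}$; since $|x|\ge \lam^{-2\mu}X_1$ this holds for large $\lam$ as soon as $\rho<2(1-\mu)$. Combined with the standing requirement $\rho>1/3$, such a $\rho$ exists \emph{precisely because} $\mu<5/6$, and I would fix any $\rho\in(1/3,\,2(1-\mu))$. For large $\lam$ the bounds $|p_{\rho}|\le C_{\rho}\lam^{-2(\rho-1/3)}$ and $|R_{\rho}|\le C_{\rho}\lam^{-1}$ of Proposition \ref{pro:Esti:Y} force $|1+p_{\rho}|\ge 1/2$ and $|e^{R_{\rho}}|\ge 1/2$, so the leading coefficient is bounded away from zero; this non-degeneracy of the prefactor is the crucial point. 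The power factor is in turn bounded below by $(\lam^2|x|)^{-3/4}\ge X_2^{-3/4}\lam^{3\mu/2-3/2}$, a fixed power of $\lam$.

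It then remains to bound ${\mathsf{Re}}\,\phi^{-}$ from above by $c\lam^{5/3-\mu}$. From \eqref{eq:phi:x:fu} the term $\tfrac25 i\lam^5|x|^{5/2}$ is purely imaginary, while \eqref{eq:Im:a} yields ${\mathsf{Re}}(ia\lam^{5/3}|x|^{1/2})=\delta\lam^{5/3}|x|^{1/2}(1+o_a(1))\le \delta X_2^{1/2}\lam^{5/3-\mu}(1+o_a(1))$ using $|x|\le \lam^{-2\mu}X_2$. The remaining contributions, $|{\mathsf{Re}}(ib|x|^{-1/2})|\le M X_1^{-1/2}\lam^{\mu}$ and $|r_{\rho}|\le C\lam^{4/3-3\rho/2}$, carry exponents $\mu$ and $4/3-3\rho/2$, both strictly below $5/3-\mu$ whenever $\mu<5/6$ and $\rho>1/3$; hence ${\mathsf{Re}}\,\phi^{-}\le c\lam^{5/3-\mu}$ for large $\lam$ and $e^{-{\mathsf{Re}}\,\phi^{-}}\ge e^{-c\lam^{5/3-\mu}}$. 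Multiplying the three lower bounds gives the assertion with $\ell=3\mu/2-3/2$. I expect the main obstacle to be the interplay between the sector constraint $\rho<2(1-\mu)$ and the requirement $\rho>1/3$: it is exactly the hypothesis $\mu<5/6$ that keeps this window open while simultaneously guaranteeing that every subleading term of ${\mathsf{Re}}\,\phi^{-}$ is dominated by $\lam^{5/3-\mu}$ and that the prefactor $1+p_{\rho}$ stays away from zero.
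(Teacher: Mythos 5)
Your proposal is correct and follows essentially the same route as the paper: the paper's own proof simply reads off from \eqref{eq:phi:x:fu} (i.e. the representation of Proposition \ref{pro:Esti:Y} transported through the connection relation \eqref{eq:rela}) the two-sided bound $-\lam^{5/3-\mu}/C_1\leq -{\mathsf{Re}}\,\phi^{-}(x,\lam)\leq -C_1\lam^{5/3-\mu}$ on the stated range and concludes. You have merely made explicit the points the paper leaves as ``clear'' --- the non-degeneracy of the prefactor $(1+p_{\rho})e^{R_{\rho}}$, the polynomial factor $(\lam^2|x|)^{-3/4}$ accounting for $\lam^{\ell}$, and the domination of the $b$-term and $r_{\rho}$ by $\lam^{5/3-\mu}$ --- so no further comparison is needed.
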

\begin{proof}It is clear from \eqref{eq:phi:x:fu} that there exists $C_1>0$ such that
\[
-\lam^{5/3-\mu}/C_1\leq -{\mathsf{Re}}\,\phi^{-}(x, \lam)
\leq -C_1\lam^{5/3-\mu}
\]
when $\lam^{-2\mu}X_1\leq -x\leq \lam^{-2\mu}X_2$. This proves the assertion.
\end{proof}
\begin{lemma}
\label{lem:outside:many} Under the same assumptions as in Lemma \ref{lem:1:ten} there exist $c>0$, $A>0$ such that 
\begin{gather*}
\big|(d/dx)^k{\mathcal Y}_0(\lam^{2} x,\; a\lam^{2/3},b\lam)\big|\leq C_{\mu}A^{k+1}(1+k^{3}+\lam^{5/2})^{k}e^{c\lam^{5/6}},\;\;k\in {\mathbb N}
\end{gather*}
for  $|x|\geq \lam^{-2\mu}X$. 
\end{lemma}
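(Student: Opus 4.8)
The plan is to reduce the statement to an estimate for high-order derivatives of $\mathcal{Y}_0$ inside its sector of subdominance $S_\lambda$, where Proposition \ref{pro:Esti:Y} applies, and then to bound those derivatives by Cauchy's integral formula on an optimally chosen circle. Writing $w(x)=\mathcal{Y}_0(\lambda^2x;a\lambda^{2/3},b\lambda)$, the chain rule gives $(d/dx)^k w(x)=\lambda^{2k}\mathcal{Y}_0^{(k)}(\lambda^2x;a\lambda^{2/3},b\lambda)$, where $\mathcal{Y}_0^{(k)}$ denotes the $k$-th derivative in the first variable, so it suffices to control $\mathcal{Y}_0^{(k)}(z;a\lambda^{2/3},b\lambda)$ at $z=\lambda^2x$. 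For $x\geq\lambda^{-2\mu}X$ the point $z$ lies on the positive real axis, already inside $S_\lambda$; for $x\leq-\lambda^{-2\mu}X$ I would first use the connection formula $\mathcal{Y}_0(z;a\lambda^{2/3},b\lambda)=-\omega\,\mathcal{Y}_0(e^{\pi i/5}|z|;\omega a\lambda^{2/3},\omega^{-1}b\lambda)$ from the proof of Lemma \ref{lem:1:ten} to move the evaluation to $\tilde z=e^{\pi i/5}|z|\in S_\lambda$, with rotated parameters (still of modulus $\leq M$, so Proposition \ref{pro:Esti:Y} applies) and unchanged modulus. The cases $k=0,1$ are then exactly Lemma \ref{lem:1:ten}, and differentiating $\mathcal{Y}_0''=(z^3+a\lambda^{2/3}z+b\lambda)\mathcal{Y}_0$ shows the quantitative content lies entirely in the sectorial growth of $\mathcal{Y}_0$.

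Next, for $z\in S_\lambda$ I would apply $\mathcal{Y}_0^{(k)}(z)=\frac{k!}{2\pi i}\oint_{|\zeta-z|=r}\frac{\mathcal{Y}_0(\zeta)}{(\zeta-z)^{k+1}}\,d\zeta$, so that $|\mathcal{Y}_0^{(k)}(z)|\leq k!\,r^{-k}\max_{|\zeta-z|=r}|\mathcal{Y}_0(\zeta)|$ for any $r$ keeping the circle inside $S_\lambda$. By Proposition \ref{pro:Esti:Y} the modulus of $\mathcal{Y}_0(\zeta)$ is governed by $e^{-\frac25\mathrm{Re}\,\zeta^{5/2}}$ together with the lower-order phases $a\lambda^{2/3}\zeta^{1/2}$, $-b\lambda\zeta^{-1/2}$ and the remainder $r_\rho$; expanding $\mathrm{Re}\,\zeta^{5/2}$ about $z$ gives $\max_{|\zeta-z|=r}|\mathcal{Y}_0(\zeta)|\lesssim |z|^{-3/4}\exp\{-\tfrac25\mathrm{Re}\,z^{5/2}+C|z|^{3/2}r+\ldots\}$. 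Optimizing $k!\,r^{-k}e^{C|z|^{3/2}r}$ in $r$ gives $r_\ast\sim k|z|^{-3/2}$ whenever $k\lesssim|z|^{5/2}$, yielding the clean bound $|\mathcal{Y}_0^{(k)}(z)|\lesssim\sqrt{k}\,|z|^{3k/2}|\mathcal{Y}_0(z)|$, while for $k\gtrsim|z|^{5/2}$ one must take $r$ comparable to $|z|$ and pay an extra factor $e^{c|z|^{5/2}}$ arising from the edge of the sector where $\mathrm{Re}\,\zeta^{5/2}$ turns negative. Substituting $z=\lambda^2x$ (or $\tilde z$) and $(d/dx)^kw=\lambda^{2k}\mathcal{Y}_0^{(k)}$ turns the per-derivative factor $|z|^{3/2}$ into $\lambda^5|x|^{3/2}$.

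The conclusion then follows by taking the supremum over $|x|\geq\lambda^{-2\mu}X$ and matching against the envelope $A^{k+1}(1+k^3+\lambda^{5/2})^k$. On the oscillatory side $x\leq-\lambda^{-2\mu}X$, where $|\mathcal{Y}_0|\sim e^{-\delta\lambda^{5/3}|x|^{1/2}}$ by \eqref{eq:phi:x:fu}--\eqref{eq:x:fu}, the product $\lambda^{5k}|x|^{3k/2}e^{-\delta\lambda^{5/3}|x|^{1/2}}$ is maximized in $t=|x|^{1/2}$ at $t_\ast\sim k\lambda^{-5/3}$, and inserting this produces precisely the $(k^3)^k$-growth, together with the factor $e^{c\lambda^{5/6}}$ inherited from $r_\rho$ (the same term $C_\mu(\lambda^\mu+\lambda^{-5/3+3\mu})\leq c\lambda^{5/6}$ as in Lemma \ref{lem:1:ten}); for $k$ so small that $t_\ast$ leaves the region the maximum sits at the boundary, where the strong decay $e^{-\delta\lambda^{5/3-\mu}}$ absorbs the surplus power $\lambda^{(5/2-3\mu)k}$ and leaves the $\lambda^{5/2}$-term of the envelope. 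On the side $x\geq\lambda^{-2\mu}X$ the much faster decay of \eqref{eq:x:sei} makes the estimate strictly better. I expect the main obstacle to be exactly the optimization and bookkeeping of the second and third steps: controlling $\max_{|\zeta-z|=r}|\mathcal{Y}_0(\zeta)|$ on circles of radius comparable to $|z|$, where the subdominant asymptotics of Proposition \ref{pro:Esti:Y} degenerate near $|\arg\zeta|=3\pi/5$, and then verifying that the several regimes (small $k$ with large $\lambda$, the crossover $k^3\sim\lambda^{5/2}$ near $k\sim\lambda^{5/6}$, and large $k$) all fit under the single stated bound uniformly in $x$.
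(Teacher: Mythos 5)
Your strategy rests on the same two pillars as the paper's proof: first, move the evaluation into the sector where Proposition \ref{pro:Esti:Y} applies via the connection formula ${\mathcal Y}_0=-\omega{\mathcal Y}_2$ and the rotation $x\mapsto e^{\pi i/5}|x|$ with rotated parameters; second, trade the per-derivative factor $\lambda^{5}|x|^{3/2}$ coming from the dominant phase $\tfrac25 z^{5/2}$ against the decay $e^{-\delta\lambda^{5/3}|x|^{1/2}}$ to produce $k^{3k}$, with the lower-order phases and $r_\rho$ supplying the $\lambda^{5/2}$ term and the $e^{c\lambda^{5/6}}$. Where you genuinely differ is the device for bounding $\mathcal{Y}_0^{(k)}$: you use Cauchy's formula on circles of optimized radius $r_*\sim k|z|^{-3/2}$ and estimate $\max_{|\zeta-z|=r}|{\mathcal Y}_0(\zeta)|$ from the asymptotic representation, whereas the paper differentiates the representation $(1+p_\mu)e^{R_\mu}x^{-3/4}e^{-\phi^{-}}$ directly, bounding $|d^k(-\phi^{-}+R_\mu)/dx^k|\leq C_\mu A^k k!\,(\lambda^5|x|^{3/2}+\lambda^{5/3}|x|^{-1/2}+\cdots)|x|^{1-k}$ and then invoking the standard estimate for derivatives of $e^{g}$. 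The two devices are interchangeable here; yours avoids the Fa\`a di Bruno bookkeeping at the price of controlling ${\mathcal Y}_0$ on circles, which forces you to keep the circle inside $S_\lambda$ --- near the inner boundary $|z|\sim\lambda^{\rho}X$ with $X<1$ this requires applying Proposition \ref{pro:Esti:Y} with a slightly smaller $\rho$ (harmless since only $\rho>1/3$ is needed), a point you should make explicit.

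Two smaller corrections. The case distinction at ``$t_*$ leaving the region'' is a non-issue: $\lambda^{5k}t^{3k}e^{-\delta\lambda^{5/3}t}\leq C^{k+1}k^{3k}$ is the global maximum over $t>0$ and therefore holds on the subregion $t\geq\lambda^{-\mu}\sqrt{X}$ with no boundary regime and no surplus power $\lambda^{(5/2-3\mu)k}$ to absorb; the paper uses exactly this one-line global bound. In your large-$k$ regime ($k\gtrsim|z|^{5/2}$, $r\sim|z|$) the argument does close, but it is worth writing out: $\lambda^{2k}k!\,|z|^{-k}e^{c|z|^{5/2}}\leq C^{k}k^{k}\lambda^{2\mu k}\leq C^{k}(k^3+\lambda^{5/2})^{k}$, using $|z|\geq\lambda^{2-2\mu}X$, $e^{c|z|^{5/2}}\leq e^{c'k}$, and Young's inequality with exponents $3$ and $3/2$ together with $3\mu<5/2$; this is the one place on the negative axis where the $\lambda^{5/2}$ term of the envelope is genuinely needed.
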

\begin{proof} We first estimate $\big|(d/dx)^k{\mathcal Y}_0(\lam^{2}x;a\lam^{2/3},b)\big|$ in $x\leq -\lam^{-2\mu}X$. 
From Proposition \ref{pro:Esti:Y} with $\rho=2(1-\mu)$ we have
\[
{\mathcal Y}_0(\lam^{2}x;a\lam^{2/3},b\lam)=C(1+p_{\mu}(x))\lam^{-3/2}x^{-3/4}e^{-\phi^{-}(x, \lam)+R_{\mu}(x)}
\]
where $p_{\mu}(x)$ and $R_{\mu}(x)$ are holomorphic and bounded in $|x|>\lam^{-2\mu}X$, $|{\rm arg}\, x|<3\pi/5$. Since $|x|^{-1}\leq \sqrt{X}\lam^{2\mu}\leq \sqrt{X}\lam^{5/3}$ we have
\begin{gather*}
|d^k(-\phi^{-}(x, \lam)+R_{\mu}(x))/dx^k|\\
\leq C_{\mu}A^kk!(\lam^5 |x|^{3/2}+\lam^{5/3}|x|^{-1/2}+C_{\mu}|x|^{-3/2})|x|^{1-k}\\
\leq C_{\mu}A^kk!(1+\lam^5 |x|^{3/2}+\lam^{5/2})\lam^{5(k-1)/3}\\
\leq C_{\mu}A^kk!\lam^{5k/3}(\lam^{10/3}|x|^{3/2}+\lam^{5/6}),\quad |x|\geq \lam^{-2\mu}X,\;\;k\geq 1.
\end{gather*}
Therefore it follows that for $x\leq -\lam^{-2\mu}X$ 
\[
\big|d^ke^{-\phi^{-}(x, \lam)+R_{\mu}(x)}/dx^k|\leq CA^k\lam^{5k/3}(\lam^{10/3}|x|^{3/2}+\lam^{5/6}+k)^ke^{-{\mathsf{Re}}\,(-\phi(x)+R_{\mu}(x))}.
\]
Since $-{\mathsf{Re}}\,(-\phi^{-}(x, \lam)+R_{\mu}(x))\leq -c\lam^{5/3}|x|^{1/2}+C_{\mu}\lam^{5/6}$ for $ x\leq -\lam^{-2\mu }X$  
with $c>0$ independent of $\mu$ and 
\[
|x|^{3k/2}e^{-c\lam^{5/3}|x|^{1/2}}\leq C^{k+1}\lam^{-5k}k^{3k}
\]
we conclude that
\[
|d^ke^{-\phi^{-}(x, \lam)+R_{\mu}(x)}/dx^k|\leq CA^k(\lam^{5/2}+k^3)^ke^{c\lam^{5/6}}
\]
which proves the assertion for $x\leq -\lam^{-2\mu}X$. For $x\geq \lam^{-2\mu}X$ it is enough to repeat the same arguments noting that \eqref{eq:x:sei} and $5-4\mu>5/3$.
\end{proof}
%%%%%%%%%
%
\begin{cor}
\label{cor:outside} Assume that ${\mathcal Y}_0(x;a\lam^{2/3},b\lam)$ verifies \eqref{eq:rela} and \eqref{eq:Im:a}.  Then there exist $c>0$, $A>0$, $C>0$ such that for any $\ep>0$ there is $\lam_{\ep}$ such that
\[
\big|(d/dx)^k{\mathcal Y}_0(\lam^{2} x,\; a\lam^{2/3},b\lam)\big|\leq C_{\ep}A^{k+1}(1+k^{3}+\lam^{5/2})^{k}e^{c\lam^{5/6}},\;\;k\in {\mathbb N}
\]
for $\lam^{-5/3+\ep}\leq |x|$, $\lam\geq \lam_{\ep}$. 
\end{cor}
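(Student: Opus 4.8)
The plan is to read Corollary \ref{cor:outside} as a uniform-in-$\ep$ repackaging of Lemma \ref{lem:outside:many}. That lemma already delivers the desired bound on a region $|x|\geq \lam^{-2\mu}X$ for each \emph{fixed} $\mu<5/6$ and $X>0$; the corollary instead asks for the region $|x|\geq \lam^{-5/3+\ep}$, valid for $\lam\geq \lam_\ep$, with constants $c,A$ that do not depend on $\ep$. Since $-2\mu>-5/3$ whenever $\mu<5/6$, no single fixed $\mu$ yields a region reaching down to $\lam^{-5/3+\ep}$ as $\ep\to 0$; one is therefore forced to let $\mu$ tend to $5/6$ together with $\ep$ and then verify that the constants do not degrade.

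First I would fix $X=1$ and, given $\ep>0$ (it suffices to treat small $\ep$, as a larger $\ep$ only enlarges the threshold and is covered by any fixed $\mu\in(0,5/6)$), set $\mu=5/6-\ep/4$, so that $0<\mu<5/6$ and Lemma \ref{lem:outside:many} applies. With this choice $\lam^{-2\mu}=\lam^{-5/3+\ep/2}\leq \lam^{-5/3+\ep}$ for all $\lam\geq 1$, hence $\{|x|\geq \lam^{-5/3+\ep}\}\subset\{|x|\geq \lam^{-2\mu}X\}$. Thus the estimate of Lemma \ref{lem:outside:many} already holds throughout the region required by the corollary, with constant $C_\mu=C_\ep$; the only remaining point is the uniformity of $c$ (and $A$) in $\ep$.

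To settle this I would reopen the exponential factor produced in the proof of Lemma \ref{lem:outside:many}. The term $e^{c\lam^{5/6}}$ there originates from the additive part of \eqref{eq:x:fu} (and of \eqref{eq:x:sei} for $x>0$), namely $-{\mathsf{Re}}\,\phi^{-}(x,\lam)\leq -\delta\lam^{5/3}|x|^{1/2}(1+o_a(1))+C_\mu(\lam^{\mu}+\lam^{-5/3+3\mu})$, whose growing part has exponent $\max(\mu,3\mu-5/3)<5/6$ precisely because $\mu<5/6$. For each fixed $\ep$ (hence fixed $\mu<5/6$ and finite $C_\mu$) one has $C_\mu(\lam^{\mu-5/6}+\lam^{3\mu-5/2})\to 0$ as $\lam\to\infty$, so choosing $\lam_\ep$ large enough forces $C_\mu(\lam^{\mu}+\lam^{-5/3+3\mu})\leq \lam^{5/6}$ for $\lam\geq \lam_\ep$. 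This lets me take $c=1$, independent of $\ep$. The constant $A$ is pinned down by the half-integer exponents $5/2,1/2,-1/2$ appearing in $E_\rho$ and by the universal combinatorial factors in the Fa\`a-di-Bruno estimate used to differentiate $e^{-\phi^{-}+R_\mu}$, so it too may be chosen independent of $\ep$; all surviving $\mu$-dependence is absorbed into $C_\ep$ and $\lam_\ep$.

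The main obstacle is exactly this final uniformity check: one must confirm that as $\mu\uparrow 5/6$ (equivalently $\rho=2(1-\mu)\downarrow 1/3$ in Proposition \ref{pro:Esti:Y}) the genuinely growing part of $-{\mathsf{Re}}\,\phi^{\pm}$ remains of order strictly below $\lam^{5/6}$, hence absorbable into $e^{c\lam^{5/6}}$ for large $\lam$, and that the per-derivative constant $A$ does not blow up even though $C_\mu$ may. Granting this, the corollary is immediate from Lemma \ref{lem:outside:many} together with the inclusion of regions established above.
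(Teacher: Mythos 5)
Your proposal is correct and takes essentially the same route as the paper, whose entire proof is the single line ``Choose $\mu=5/6-\ep/2$ in Lemma \ref{lem:outside:many}''; your choice $\mu=5/6-\ep/4$ (with $X=1$) is an equally valid variant of the same idea. Your extra verification that the growing part $C_{\mu}(\lam^{\mu}+\lam^{-5/3+3\mu})$ of the exponent is $o(\lam^{5/6})$ for each fixed $\mu<5/6$, so that $c$ and $A$ can be taken independent of $\ep$ by enlarging $\lam_{\ep}$, is a legitimate point that the paper leaves implicit.
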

\begin{proof} Choose $\mu=5/6-\ep/2$ in Lemma \ref{lem:outside:many}.
\end{proof}

Next we estimate ${\mathcal Y}_0(\lam^{2}x;a\lam^{2/3},b\lam)$ for $|x|\leq \lam^{-5/3+\ep}$.

\begin{lemma}
\label{lem:time:T}{\rm (\cite[Lemma 6.5, Lemma 6.7]{BN:JHDE})}  Assume that $y(x,\lam)$ satisfies 
\begin{equation}
\label{eqn:1}
y''(x,\lambda)=(x^3+a\lam^{2/3} x+b\lambda)y(x,\lambda),\quad |a|,\;|b|\leq M.
\end{equation}
Then there are $c>0$, $C>0$ and $\ell_i>0$ such that for any $T>0$ we have
\begin{eqnarray*}
\big|(d/dx)^ky(x,\lambda)\big| \leq C^{k+1}(k+\lambda^{1/3}+|x|)^{3k/2}\lam^{\ell_1}(1+T)^{\ell_2
}e^{c\lam^{5/6}(1+\lam^{-1/3}T)^{5/2}}\\
\times \big\{|y(T,\lam)|+|y'(T,\lam)|\big\},\quad |x|\leq T,\;\;k\in\N,\;\;\lam\geq 1.
\end{eqnarray*}
\end{lemma}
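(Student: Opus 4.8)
The plan is to decompose the statement into two independent pieces: first a sharp a priori bound for $y$ and $y'$ themselves (the cases $k=0,1$), obtained from a WKB-adapted energy estimate; and then a bootstrap from $k=0,1$ to all $k$ using the fact that $V(x)=x^3+a\lam^{2/3}x+b\lam$ is a cubic, so differentiating the equation yields a \emph{finite} recursion. It is instructive to see why a naive approach fails: the crude energy $|y|^2+|y'|^2$ satisfies $|\tfrac{d}{dx}(|y|^2+|y'|^2)|\leq(1+|V|)(|y|^2+|y'|^2)$, and Gronwall then produces $\exp(\int_{-T}^T|V|\,dx)$; since $\int_{-T}^T|V|\,dx$ is of order $T^4+\lam^{2/3}T^2+\lam T$, this already exceeds $\lam^{5/6}$ near $T\sim\lam^{1/3}$ and is far too large. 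The correct exponent is the WKB phase $\int\sqrt{|V|}$: after the rescaling $x=\lam^{1/3}t$ the equation becomes $\tilde y''=\lam^{5/3}(t^3+at+b)\tilde y$, a semiclassical problem with $\hbar=\lam^{-5/6}$, whose growth rate is $\lam^{5/6}\int\sqrt{|t^3+at+b|}\,dt\sim\lam^{5/6}(1+\lam^{-1/3}T)^{5/2}$.

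For the first step I would use the weighted energy
\[
\mathcal E(x)=|y'(x)|^2+W(x)^2|y(x)|^2,\qquad W(x)=\sqrt{|V(x)|+\lam},
\]
the offset $\lam$ keeping $W$ bounded below and fixing the correct scale near the (possibly complex) zeros of $V$. From $y''=Vy$ one computes $\mathcal E'=2\,{\mathsf{Re}}\big((V+W^2)y\bar{y'}\big)+(W^2)'|y|^2$, and estimating $|y||y'|\leq\mathcal E/2W$, $|y|^2\leq\mathcal E/W^2$ gives $|\mathcal E'|\leq g\,\mathcal E$ with $g\leq 2\sqrt{|V|+\lam}+|V'|/(|V|+\lam)$. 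Integrating this inequality backwards from the endpoint $T$ yields $\mathcal E(x)\leq\mathcal E(T)\exp\big(\int_x^T g\big)$ for $|x|\leq T$. The principal part $2\int_{-T}^T\sqrt{|V|+\lam}\,dx\leq c\,\lam^{5/6}(1+\lam^{-1/3}T)^{5/2}$ follows by directly computing $\int\sqrt{|x|^3+\lam^{2/3}|x|+\lam}\,dx$, while the adiabatic part $\int_{-T}^T|V'|/(|V|+\lam)\,dx$ is only of order $1+\log(1+\lam^{-1/3}T)$ (split at $|x|\sim\lam^{1/3}$ and use $|V|\geq\tfrac12|x|^3$ for large $|x|$), hence negligible. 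Since $\mathcal E(T)\leq C(\lam^{1/3}+T)^3(|y(T)|^2+|y'(T)|^2)$ and $W\geq\lam^{1/2}$, this gives the cases $k=0,1$ with $\ell_1=1/2$, $\ell_2=3/2$.

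For the second step, differentiating $y''=Vy$ and using $V^{(j)}=0$ for $j\geq4$ gives
\[
y^{(k+2)}=V\,y^{(k)}+k\,V'\,y^{(k-1)}+\binom{k}{2}V''\,y^{(k-2)}+\binom{k}{3}V'''\,y^{(k-3)},
\]
with $|V^{(j)}(x)|\leq C_0R^{3-j}$, where $R:=\lam^{1/3}+|x|\geq 1$. I would then prove the ansatz $|y^{(k)}(x)|\leq\mathcal K\,\mathcal C^{\,k}(k+R)^{3k/2}$ by induction, taking $\mathcal K$ to be the $k=0,1$ bound of the previous step and $\mathcal C$ a large constant. Writing $S=k+R\geq\max(k,R)$, the $j=0$ term contributes a fraction $C_0/\mathcal C^2$ of the target $\mathcal K\mathcal C^{\,k+2}(S+2)^{3k/2+3}$, while for $j\geq1$ the bounds $\binom{k}{j}\leq S^{\,j}$ show that each remaining term gains negative powers of $S$ and of $\mathcal C$; choosing $\mathcal C$ large closes the induction and reproduces the factor $(k+\lam^{1/3}+|x|)^{3k/2}$ together with $C^{k+1}$.

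The decisive point is the sharp exponent in the first step: one must replace the naive $\int|V|$ by the WKB phase $\int\sqrt{|V|}$, which is precisely what the weight $W=\sqrt{|V|+\lam}$ achieves in the energy identity. Two features then need care. Because $a,b$ are complex, $V$ need not be real and there is no genuine classically allowed region, so I simply bound $|V+W^2|\leq 2|V|+\lam$, losing nothing at the level of the stated exponent. And the adiabatic term $\int|V'|/(|V|+\lam)$ must be verified to be of lower order, for which the offset $\lam$ and the fact that the cubic $V$ has at most three real zeros are what is used.
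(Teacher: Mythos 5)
Your proof is correct: the weighted energy $|y'|^2+(|V|+\lam)|y|^2$ does yield the WKB exponent $\int_{-T}^{T}\sqrt{|V|+\lam}\,dx\leq c\,\lam^{5/6}(1+\lam^{-1/3}T)^{5/2}$ for $k=0,1$ (the adiabatic term $\int |V'|/(|V|+\lam)$ contributing only a power of $1+T$, absorbed into $(1+T)^{\ell_2}$, and $\mathcal{E}(T)\leq C(\lam^{1/3}+T)^3(|y(T)|^2+|y'(T)|^2)$ giving the $\lam^{\ell_1}(1+T)^{\ell_2}$ prefactor), while the Leibniz recursion with $|V^{(j)}(x)|\leq C(\lam^{1/3}+|x|)^{3-j}$ closes the induction on $k$ exactly as you indicate. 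Note that the paper itself gives no proof of this lemma---it is imported from \cite[Lemmas 6.5 and 6.7]{BN:JHDE}---and your decomposition into an a priori bound for $k=0,1$ followed by a bootstrap to all $k$ corresponds precisely to the division of labour between those two cited lemmas, so this is essentially the intended argument rather than a genuinely different route.
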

%

%%%%%%%%%%%%%
%%%%%%%%%%%%%%%%%%%%%%%%%%%%
%
\begin{pro}
\label{pro:inside:many} Assume that ${\mathcal Y}_0(x;a\lam^{2/3},b\lam)$ verifies \eqref{eq:rela}  and \eqref{eq:Im:a}.
Then there are $\ell$, $c>0$, $A>0$, $C>0$ such that for any $\ep>0$ there is $\lam_{\ep}$ such that
\begin{gather*}
\big|(d/dx)^k{\mathcal Y}_0(\lam^{2} x; a\lam^{2/3},b\lam)\big|\leq CA^{k+1}
 \lam^{\ell}
(k^{3}+\lam^{4})^{k}e^{c\lam^{5/6+\ep}},\;\;k\in\N
\end{gather*}
for $\lam\geq \lam_{\ep}$.
\end{pro}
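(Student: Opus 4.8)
The plan is to cover the whole real axis by splitting it at $|x|=\lam^{-5/3+\ep'}$ with $\ep'=2\ep/5$, and to treat the \emph{outer} region $|x|\geq \lam^{-5/3+\ep'}$ and the \emph{inner} region $|x|\leq \lam^{-5/3+\ep'}$ separately. On the outer region Corollary \ref{cor:outside} applies verbatim (with its $\ep$ taken to be $\ep'$), and since $1+k^3+\lam^{5/2}\leq C(k^3+\lam^4)$ and $e^{c\lam^{5/6}}\leq e^{c\lam^{5/6+\ep}}$ for $\lam$ large, the estimate there is already dominated by the asserted one. Thus the only genuine work is on the inner region.

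On the inner region I would pass to the unscaled variable $t=\lam^2x$, so that $|t|\leq T:=\lam^{1/3+\ep'}$, and apply Lemma \ref{lem:time:T} to $y(\cdot,\lam)={\mathcal Y}_0(\cdot;a\lam^{2/3},b\lam)$; this is legitimate because $|a|,|b|\leq M$ by \eqref{eq:a:b:teigi}. The data at the single endpoint $t=T$, i.e. at the scaled point $x_0=\lam^{-5/3+\ep'}$ which sits on the boundary of the outer region, are controlled by Corollary \ref{cor:outside} for $k=0,1$: using $(d/dx)^k{\mathcal Y}_0(\lam^2x;a\lam^{2/3},b\lam)=\lam^{2k}y^{(k)}(t,\lam)$ one gets $|y(T,\lam)|+|y'(T,\lam)|\leq C_{\ep'}\lam^{1/2}e^{c\lam^{5/6}}$. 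Inserting this into Lemma \ref{lem:time:T} and converting back to $x$-derivatives by the same factor $\lam^{2k}$, the only genuinely $k$-dependent quantity left to control is
\[
\lam^{2k}\,(k+\lam^{1/3}+T)^{3k/2}.
\]

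The heart of the matter is the bookkeeping of this factor, and it is where the exponent $4$ is produced. Since $k+\lam^{1/3}+T\leq k+2\lam^{1/3+\ep'}$, splitting $(k+2\lam^{1/3+\ep'})^{3k/2}\leq 2^{3k/2}(k^{3k/2}+(2\lam^{1/3+\ep'})^{3k/2})$ and multiplying by $\lam^{2k}$ yields the two terms $(\lam^2k^{3/2})^k$ and $(\lam^{5/2+3\ep'/2})^k$. The elementary bound $\lam^2k^{3/2}=\sqrt{\lam^4k^3}\leq k^3+\lam^4$ handles the first, while $\lam^{5/2+3\ep'/2}\leq \lam^4$ (valid once $\ep'\leq 1$) handles the second, so the whole factor is at most $C_0^{\,k}(k^3+\lam^4)^k$. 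For the exponential, $(1+\lam^{-1/3}T)^{5/2}=(1+\lam^{\ep'})^{5/2}\leq 2^{5/2}\lam^{5\ep'/2}=2^{5/2}\lam^{\ep}$ for $\lam\geq1$, so $e^{c\lam^{5/6}(1+\lam^{-1/3}T)^{5/2}}e^{c\lam^{5/6}}\leq e^{c'\lam^{5/6+\ep}}$.

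Finally I would make $\ell,c,A,C$ independent of $\ep$. As a larger $\ep$ only weakens the target (for $\lam\geq1$), I may assume $0<\ep\leq1$, so $\ep'=2\ep/5\leq 2/5$; then $(1+T)^{\ell_2}\leq C\lam^{(1/3+\ep')\ell_2}\leq C\lam^{11\ell_2/15}$ and every polynomial power of $\lam$ stays bounded uniformly, giving a fixed $\ell=\ell_1+11\ell_2/15+1/2$. The $\ep$-dependent constant $C_{\ep'}$ is a fixed number for fixed $\ep$, hence $C_{\ep'}\leq e^{\lam^{5/6+\ep}}$ once $\lam\geq\lam_{\ep}$; absorbing it into the exponential affects only $\lam_{\ep}$ and the value of $c$. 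Collecting the surviving $k$-dependence as $C^{k+1}C_0^{\,k}\leq A^{k+1}$ gives the asserted bound on the inner region, which combined with the (dominated) outer estimate proves the proposition. The main obstacle is exactly this last uniformization step taken together with the AM--GM estimate pinning down $\lam^4$; the remainder is a mechanical substitution into Lemma \ref{lem:time:T} and Corollary \ref{cor:outside}.
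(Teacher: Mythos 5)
Your proposal is correct and follows essentially the same route as the paper: split at $|x|\sim\lam^{-5/3+\ep}$, invoke Corollary \ref{cor:outside} outside, and inside apply Lemma \ref{lem:time:T} with $T=\lam^{1/3+\ep}$ using endpoint data of size $O(\lam^{\ell}e^{c\lam^{5/6}})$, then bound $\lam^{2}k^{3/2}\leq k^{3}+\lam^{4}$. The only cosmetic differences are that the paper takes the endpoint data from Lemma \ref{lem:1:ten} rather than from the corollary, and absorbs the factor $\lam^{(5/2+2\ep)k}$ via $\lam^{(5/2+2\ep)k}\leq k^{3k}e^{\lam^{5/6+3\ep}}$ where you simply use $\lam^{5/2+3\ep'/2}\leq\lam^{4}$.
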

 \begin{proof}
 Applying Lemma \ref{lem:1:ten} with $\mu=5/6-\ep/2$.  we have
 \[
  \big|(d/dx)^k{\mathcal Y}_0(\pm\lam^{1/3+\ep})\big|\leq C\lam^{\ell}e^{c_1\lam^{5/6}},\;\;\lam\geq \lam_{\ep},\;\; k=0, 1.
  \]
Since ${\mathcal Y}_0(x)={\mathcal Y}_0(x, a\lam^{2/3}, b\lam)$  satisfies \eqref{eqn:1}, choosing $T=\lam^{1/3+\ep}$ in Lemma \ref{lem:time:T} we get
\[
\big|(d/dx)^k{\mathcal Y}_0(x)\big|\leq C^{k+1}\lam^{\ell}(k+\lam^{1/3+\ep})^{3k/2}e^{c\lam^{5/6+3\ep}},\;\;|x|\leq \lam^{1/3+\ep},\;\;  k\in {\mathbb N}
\]
for $\lam\geq \lam_{\ep}$. This proves that
\begin{gather*}
\big|(d/dx)^k{\mathcal Y}_0(\lam^{2}x)\big|\leq C^{k+1}\lam^{\ell}(\lam^{2}k^{3/2}+\lam^{5/2+2\ep})^{k}e^{c\lam^{5/6+3\ep}}
\end{gather*}
for $|x|\leq \lam^{-5/3+\ep}$. Since $\lam^{k(5/2+2\ep)}e^{-\lam^{5/6+3\ep}}\leq k^{3k}$ and $
\lam^{2}k^{3/2}\leq C(\lam^{4}+k^3)$ 
combining Corollary \ref{cor:outside} with the above-obtained estimates we conclude the assertion.
\end{proof}
Recalling
\[
V_{\lam}(x')=e^{i\lam^5x_2-i(b_1/2)x_1}{\mathcal Y_0}(\lam^2 x_1; \lam^{2/3}a(\lam), \lam b(\lam))
\]
with $\lam^{2/3}a(\lam)=2\xi_0$ and $b(\lam)=b_2+b_0\xi_0\lam^{-4}-\xi_0^2\lam^{-3}-b_1^2\lam^{-5}/4$ one has
\begin{lemma}
\label{lem:V:bibun} There exist $c>0$, $A>0$, $C>0$  and $\lam_0>0$  such that 
\begin{equation}
\label{eq:V:bibun}
\big|\dif_{x_1}^{k}V_{\lam}(x')\big|\leq CA^{k}(k!)^3e^{c\lam^{4/3}},\quad k\in\N,\;\lam\geq \lam_0.
\end{equation}
\end{lemma}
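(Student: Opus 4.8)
The plan is to reduce everything to Proposition \ref{pro:inside:many}, which already controls all $x_1$-derivatives of $x_1\mapsto {\mathcal Y}_0(\lam^2x_1;\lam^{2/3}a(\lam),\lam b(\lam))$, and then to convert its growth factor $(k^3+\lam^4)^k$ into a Gevrey-$3$ bound. First I would check that the hypotheses \eqref{eq:rela} and \eqref{eq:Im:a} of Proposition \ref{pro:inside:many} are in force: Proposition \ref{pro:sol} gives $C_0(2\xi_0,a_3)=0$, hence ${\mathcal Y}_0=-\om{\mathcal Y}_2$, which is \eqref{eq:rela}, while $2\xi_0=c\,\lam^{2/3}(1+o(1))$ with ${\mathsf{Im}}\,c<0$ forces ${\mathsf{Im}}\,a(\lam)=-\de(1+o_a(1))$ for some $\de>0$, which is \eqref{eq:Im:a}. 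Since $x_2$ is real the factor $e^{i\lam^5x_2}$ has modulus $1$ and is independent of $x_1$, so it can be pulled out. Applying the Leibniz rule to $e^{-i(b_1/2)x_1}{\mathcal Y}_0(\lam^2x_1;\cdots)$ and using $|\dif_{x_1}^{k-j}e^{-i(b_1/2)x_1}|=(|b_1|/2)^{k-j}$ gives
\[
\big|\dif_{x_1}^kV_\lam(x')\big|\le \sum_{j=0}^k\binom{k}{j}(|b_1|/2)^{k-j}\big|\dif_{x_1}^j{\mathcal Y}_0(\lam^2x_1;\lam^{2/3}a,\lam b)\big|.
\]

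Next I would invoke Proposition \ref{pro:inside:many} with the specific choice $\ep=1/2$, so that $5/6+\ep=4/3$ and its exponential factor becomes exactly $e^{c\lam^{4/3}}$, the rate demanded by the target. This yields, for $\lam\ge\lam_0$,
\[
\big|\dif_{x_1}^j{\mathcal Y}_0(\lam^2x_1;\cdots)\big|\le CA^{j+1}\lam^{\ell}(j^3+\lam^4)^je^{c\lam^{4/3}}.
\]

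The heart of the matter is to absorb $(j^3+\lam^4)^j$ into $A^j(j!)^3$ at the cost of the exponential. I would split $(j^3+\lam^4)^j\le 2^j(j^{3j}+\lam^{4j})$. The first summand is handled by the elementary bound $j^j\le e^jj!$, giving $j^{3j}\le e^{3j}(j!)^3$. The decisive step is the second summand, where I would write
\[
\frac{\lam^{4j}}{(j!)^3}=\Big(\frac{(\lam^{4/3})^j}{j!}\Big)^3\le \big(e^{\lam^{4/3}}\big)^3=e^{3\lam^{4/3}},
\]
using $y^j/j!\le e^y$ with $y=\lam^{4/3}$. This is precisely where the Gevrey order $3$ and the coefficient $\lam^4$ conspire to produce the exponent $4/3$: the supremum over $j$ of $\lam^{4j}/(j!)^3$ is attained near $j\sim\lam^{4/3}$ and equals $e^{3\lam^{4/3}}$ up to lower-order factors. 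Combining the two summands gives $(j^3+\lam^4)^j\le 2(j!)^3A_1^je^{3\lam^{4/3}}$ with $A_1=2e^3$.

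Finally I would assemble the pieces. Substituting back, using $(j!)^3\le(k!)^3$ for $j\le k$, and collapsing the remaining sum by the binomial theorem,
\[
\sum_{j=0}^k\binom{k}{j}(|b_1|/2)^{k-j}(AA_1)^j(j!)^3\le (k!)^3\big(|b_1|/2+AA_1\big)^k.
\]
The leftover polynomial factor $\lam^{\ell}$ is absorbed into the exponential via $\lam^{\ell}\le e^{\lam^{4/3}}$ for $\lam$ large, and the several $e^{(\cdots)\lam^{4/3}}$ factors merge into one. This produces $\big|\dif_{x_1}^kV_\lam(x')\big|\le C'(A')^k(k!)^3e^{c'\lam^{4/3}}$ for $\lam\ge\lam_0$, which is the claim. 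The main obstacle is exactly the middle estimate on $\lam^{4j}$: one must match the cubic growth $(j!)^3$ against the quartic coefficient $\lam^4$, and it is this matching that fixes the rate $\lam^{4/3}$ and, ultimately, the Gevrey threshold $s>3$ in Proposition \ref{pro:main}.
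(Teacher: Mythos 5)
Your proof is correct and follows essentially the same route as the paper: the paper's entire argument is the inequality $\lam^{4k}\leq C_1^{k}(k!)^3e^{3\lam^{4/3}}$ combined with Proposition \ref{pro:inside:many}, which is exactly your central estimate $\lam^{4j}/(j!)^3\leq e^{3\lam^{4/3}}$. You merely spell out the routine details (verifying \eqref{eq:rela} and \eqref{eq:Im:a} via Proposition \ref{pro:sol}, the Leibniz expansion over $e^{-i(b_1/2)x_1}$, and absorbing $j^{3j}$ and $\lam^{\ell}$) that the paper leaves implicit.
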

\begin{proof}
Noting $\lam^{4k}\leq C_1^{k}(k!)^3e^{3\lam^{4/3}}$ the assertion follows from Proposition \ref{pro:inside:many}.
\end{proof}
%
 
%%%%%%%%%%%%%%%%%%%%%%%%%%%%%%%%%%%%%%%%%
%%%%%%%%%%%%%%%%
\section{Proof of Theorem \ref{thm:main:bis}}

Assume that $b_2\neq 0$ satisfies \eqref{eq:argA}. Following Section \ref{sec:solv} we have a family of exact solutions $\{U_{\lambda}\}$ satisfying $(P_{mod}+\sum_{j=0}^2b_jD_j)U_{\lambda}=0$. We show that $\{U_{\lambda}\}$   
does not satisfy apriori estimates derived from $\gamma^{(s)}$ local solvability of the Cauchy problem if $s>3$. Let $h>0$ and a compact set $K$ be fixed and denote by $\gamma_0^{(s),h}(K)$ the set of all $f(x')\in \gamma^{(s)}(\R^2)$ such that ${\rm supp}\,f\subset K$ and \eqref{eq:gevrey} holds with some $C>0$ for all $\al\in \N^2$. Note that $\gamma_0^{(s), h}(K)$ is a Banach space with the norm
\[
\sup_{\al, x}\frac{|\dif_x^{\al}f(x')|}{h^{|\al|}|\al|!^s}.
\]
\begin{pro}[Holmgren]
\label{pro:Holmgren}Denote $
D_{\ep}=\{x\in \R^3\mid |x'|^2+|x_0|<\ep\}$. 
There exists $\ep_0>0$ such that for $\ep$ satisfying  $0<\ep<\ep_0$  if $u(x)\in C^2(D_{\ep})$ satisfies
\[
\begin{cases}
\big(P_{mod}+\sum_{j=0}^2b_jD_j\big)u=0\;\;\text{in}\;\; D_{\ep},\\\;D_0^ju(0, x')=0\;\;(j=0,1),\;\;x\in D_{\ep}\cap\{x_0=0\}
\end{cases}
\]
then $u(x)\equiv 0$ in $D_{\ep}$.
\end{pro}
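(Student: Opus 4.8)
The plan is to reduce the statement to the classical Holmgren uniqueness theorem. The first observation is that the full operator $P=P_{mod}+\sum_{j=0}^2 b_jD_j$ has coefficients that are polynomials in $x$, hence real-analytic, and that the initial hyperplane $\{x_0=0\}$ is non-characteristic. Indeed, the principal symbol evaluated at the conormal direction $(1,0,0)$ is $p(x,(1,0,0))=-1$, so in fact \emph{every} hyperplane $\{x_0=c\}$ is non-characteristic at every point; more generally, along the boundary $\partial D_\epsilon$ away from the equator $\{x_0=0\}$ the outer conormal is proportional to $(\operatorname{sgn}(x_0),2x_1,2x_2)$, and
\[
p\big(x,(\operatorname{sgn}(x_0),2x_1,2x_2)\big)=-1+4\,\operatorname{sgn}(x_0)\,x_1x_2+4x_1^2+4x_1^3x_2^2,
\]
which is $-1+O(|x|)$ near the origin. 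Hence, choosing $\epsilon_0$ small enough that $\overline{D_{\epsilon_0}}$ lies in the neighborhood where this expression stays nonzero, all the surfaces we use are non-characteristic and carry analytic coefficients, so Holmgren's theorem applies throughout.

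Next I would invoke the \emph{local} Holmgren theorem along $\{x_0=0\}$. Since $u$ and $D_0u$ vanish on the non-characteristic analytic surface $\{x_0=0\}\cap D_\epsilon$, Holmgren's theorem yields a two-sided neighborhood of each such point in which $u\equiv0$; patching these gives an open set $N\supset\{x_0=0\}\cap D_\epsilon$ with $u\equiv0$ in $N$. In particular $u$ vanishes on a small bicone $D_a=\{|x'|^2+|x_0|<a\}$ for some $a>0$, and $u\equiv0$ throughout a full two-sided collar of the equatorial disk.

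Finally, I would globalize the vanishing by a non-characteristic deformation argument (the globalized form of Holmgren's theorem). Taking $\phi(x)=|x'|^2+|x_0|$ and $X=D_\epsilon$, the sets $X_t=\{\phi<t\}=D_t$ satisfy $\overline{D_t}\Subset D_\epsilon$ for $t<\epsilon$, and their level surfaces are non-characteristic by the computation above; propagating from $X_a=D_a$ (where $u\equiv0$) and letting $t\uparrow\epsilon$ forces $u\equiv0$ on all of $D_\epsilon$. Equivalently one may run a continuity argument separately in $x_0>0$ and $x_0<0$: with $T^{+}=\sup\{t:u\equiv0\text{ in }D_\epsilon\cap\{x_0<t\}\}$, the $C^2$-vanishing of $u$ for $x_0<T^+$ makes $u$ and $D_0u$ vanish on the non-characteristic slice $\{x_0=T^+\}\cap D_\epsilon$, so Holmgren pushes the vanishing past $T^+$, a contradiction unless $T^+$ reaches the top of $D_\epsilon$.

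The step I expect to be the main obstacle is this last globalization, not the local application of Holmgren. The naive continuation along flat slices $\{x_0=t\}$ fails because those slices are not compact in $D_\epsilon$ and meet the lateral boundary $\{|x'|^2+|x_0|=\epsilon\}$, where $u$ is not defined on both sides. This is precisely why $D_\epsilon$ is defined through $|x'|^2+|x_0|$: its paraboloidal caps \emph{are} the level sets of a non-characteristic foliation whose closures stay compactly inside $D_\epsilon$, so the swept region never escapes the domain of $u$. The only remaining care is the corner of $\phi=|x'|^2+|x_0|$ along $\{x_0=0\}$, where $\phi$ is merely Lipschitz; this is harmless because $u$ already vanishes in the collar $N$, so one may smooth $\phi$ inside $N$ (or treat the two caps $x_0>0$ and $x_0<0$ independently) without affecting the argument. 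Choosing $\epsilon_0$ small keeps every surface in the region where the non-characteristic condition $p(x,(1,0,0))\equiv-1$ and its perturbations remain valid, which yields the asserted $\epsilon_0$.
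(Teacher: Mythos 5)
The paper states this proposition without proof, presenting it as a direct instance of the classical (global) Holmgren uniqueness theorem, and your argument — analytic coefficients, the non-characteristicity computations $p(x,(1,0,0))=-1$ and $p(x,(\pm1,2x_1,2x_2))=-1+O(|x|)$, local Holmgren along $\{x_0=0\}$, and then the sweep of the lens-shaped region $D_{\epsilon}$ by the compactly exhausting non-characteristic level sets of $|x'|^2+|x_0|$ (with the corner at $x_0=0$ handled by the collar where $u$ already vanishes, or by treating the two caps separately) — is precisely the standard proof being invoked. It is correct; the only blemish is the ``equivalently'' paragraph proposing continuation along flat slices $\{x_0=t\}$, which you yourself then rightly discard, so the main line of reasoning is unaffected.
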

\begin{lemma}
\label{lem:futosiki}{\rm(e.g.\cite[Proposition 4.1, Theorem 4.2]{Miz}, \cite{Lax})}
Assume that the Cauchy problem for $P_{mod}+\sum_{j=0}^2b_jD_j$ is locally solvable in $\gamma^{(s)}$ at the origin. Then there exists $\delta>0$ such that for any  $0<\ep_1<\delta$ and any $\Phi=(u_j(x'))\in \gamma_0^{(s),h}(\{|x'|\leq \ep_1\})$ there is a unique solution $u(x)\in C^2(D_{\delta})$ to the  Cauchy problem \eqref{eq:CP} with $U_{\Phi}=D_{\delta}$ and for any compact set $L\subset D_{\delta}$ there exists $C>0$ such that 
\begin{equation}
\label{eq:G:hyoka}
|u(x)|_{C^2(L)}\leq C\sum_{j=0}^1\sup_{\al, x'}\frac{|\dif_{x'}^{\al}u_j(x')|}{h^{|\al|}|\al|!^s}
\end{equation}
holds.
\end{lemma}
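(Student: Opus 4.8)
The plan is to obtain \eqref{eq:G:hyoka} as an instance of the open mapping theorem, with Proposition~\ref{pro:Holmgren} supplying uniqueness and the Baire category theorem manufacturing a single neighborhood of the origin that works for all Cauchy data. Fix the radius $\ep_0>0$ of Proposition~\ref{pro:Holmgren} and a reference compact set, say $K=\{|x'|\le 1\}$. I would work in the Banach space $X=(\gamma_0^{(s),h}(K))^2$ with the norm $\|\Phi\|=\sum_{j=0}^1\sup_{\al,x'}|\dif_{x'}^{\al}u_j(x')|/(h^{|\al|}|\al|!^s)$, and in the Banach spaces $C^2(\overline{D_{1/n}})$ for integers $n$ with $1/n<\min(\ep_0,1)$.

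For each such $n$ introduce the graph
\[
\mathcal{G}_n=\big\{(\Phi,u)\in X\times C^2(\overline{D_{1/n}}):\ \big(P_{mod}+\textstyle\sum_j b_jD_j\big)u=0\ \text{in}\ D_{1/n},\ D_0^ju(0,\cdot)=u_j\big\}.
\]
I would first check that $\mathcal{G}_n$ is a \emph{closed} subspace, hence itself a Banach space: if $(\Phi_k,u_k)\to(\Phi,u)$ then $u_k\to u$ in $C^2$, so the equation and both Cauchy traces survive the limit because $P_{mod}+\sum_j b_jD_j$ has continuous (indeed polynomial) coefficients. The projection $\pi_n\colon\mathcal{G}_n\to X$, $(\Phi,u)\mapsto\Phi$, is bounded and injective: if $(\Phi,u),(\Phi,u')\in\mathcal{G}_n$ then $u-u'$ is a $C^2(D_{1/n})$ solution with zero Cauchy data and vanishes by Proposition~\ref{pro:Holmgren}, as $1/n<\ep_0$. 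Finally, by the very definition of local solvability each $\Phi\in X$ has a $C^2$ solution on some $D_{\delta(\Phi)}$, whence $X=\bigcup_n\pi_n(\mathcal{G}_n)$.

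Now write $X=\bigcup_{n,k}\overline{\pi_n(kB_n)}$ with $B_n$ the closed unit ball of $\mathcal{G}_n$. The Baire category theorem yields $N$ and $k_0$ for which $\overline{\pi_N(k_0B_N)}$ has nonempty interior; since $\mathcal{G}_N$ is complete, the successive-approximation step in the proof of the open mapping theorem promotes this to surjectivity of $\pi_N$ onto $X$ together with the bound $\|\pi_N^{-1}\Phi\|_{\mathcal{G}_N}\le C\|\Phi\|$---the series produced in that iteration converges in $\mathcal{G}_N$ exactly because $\mathcal{G}_N$ is a Banach space. Being a bounded bijection of Banach spaces, $\pi_N$ has a bounded inverse, so with $\delta=1/N$ every $\Phi\in X$ admits a unique (Holmgren) solution $u\in C^2(\overline{D_\delta})$ with $|u|_{C^2(\overline{D_\delta})}\le C\|\Phi\|$. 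For any $\ep_1<\delta$ the data in $\gamma_0^{(s),h}(\{|x'|\le\ep_1\})$ form a subspace of $X$ supported in the base $D_\delta\cap\{x_0=0\}$, and restricting the bound to a compact $L\subset D_\delta$ gives \eqref{eq:G:hyoka}.

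The genuine obstacle is the passage from the data-dependent neighborhoods $U_\Phi$ of the solvability hypothesis to one common radius $\delta$: the raw output of Baire is only \emph{density} of some $\pi_N(\mathcal{G}_N)$, which by itself yields neither a solution for every datum on a fixed ball nor a bounded solution operator. Turning density into surjectivity with an estimate is precisely the open mapping iteration, and for it to terminate inside $\mathcal{G}_N$ one must know that $\mathcal{G}_N$ is honestly closed in the $C^2$-topology---so that a $C^2$-convergent series of solutions has a $C^2$ limit that still solves the Cauchy problem. This closedness, riding on the completeness of $C^2(\overline{D_{1/N}})$ and the continuity of the coefficients of $P_{mod}+\sum_j b_jD_j$, is the only analytic ingredient beyond Holmgren's theorem that the argument requires.
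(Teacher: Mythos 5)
Your argument is correct and is essentially the proof the paper delegates to its references: the paper offers no proof of Lemma \ref{lem:futosiki} beyond citing Mizohata and Lax, and the argument given there is precisely this Baire category/open mapping scheme, with Holmgren's uniqueness making the solution map single-valued and the closedness of the solution graph in $X\times C^2(\overline{D_{1/n}})$ supplying the completeness needed for the iteration. Your identification of the two essential points --- converting the data-dependent neighborhoods $U_\Phi$ into one fixed $D_\delta$ via Baire, and verifying that the graph is closed so the open mapping iteration converges --- matches the standard treatment, so nothing further is needed.
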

Since $\lam^{5k}\leq k!^se^{s\lam^{5/s}}$ and $U_{\lam}(0, x')=V_{\lam}(x')$, $D_0 U_{\lam}(0, x')= \xi_0(\lam)\lam V_{\lambda}(x')$ it is clear from Lemma \ref{lem:V:bibun} that one can find $c_1>0$, $C>0$ such that
\begin{equation}
\label{eq:U:futosiki}
\sum_{j=0}^1\sup_{\al, x'}\frac{|\dif_{x'}^{\al}D_0^jU_{\lam}(0, x')|}{h^{|\al|}|\al|^{s|\al|}}\leq Ce^{c_1\lam^{\max{\{5/s, 4/3\}}}},\quad s\geq 3.
\end{equation}
On the other hand thanks to Lemma \ref{lem:X:1:2} 
and Proposition \ref{pro:sol} there is $c_0>0$ such that
\begin{equation}
\label{eq:U:x:0}
\big|U_{\lam}( x_0, -\lam^{-2\mu}, 0)\big|\geq C\lam^{\ell}e^{c_0\lam^{5/3} x_0-c\lam^{5/3-2\mu}},\quad x_0>0
\end{equation}
where $\mu$ is chosen such that $0<\mu<5/6$. Let $\chi(x')\in \gamma^{(s)}(\R^2)$ be such that $\chi(x')=0$  for $|x'|\geq \sqrt{\ep_1}$ and $\chi(x')=1$ for $|x'|\leq \sqrt{\ep_2}<\sqrt{\ep_1}$. Since  $\Phi_{\lam}=\chi(x')(U_{\lam}(0, x'), D_0U_{\lam}(0, x'))\in \gamma_0^{(s), h}(\{|x'|\leq \ep_1\})$, thanks to Lemma \ref{lem:futosiki},  there is a unique solution $u_{\lam}(x)\in C^{2}(D_{\delta})$ to the Cauchy problem  with Cauchy data  $\Phi_{\lam}(x')$ which satisfies \eqref{eq:U:futosiki}. 
Thanks to Proposition \ref{pro:Holmgren} we see that $u_{\lam}=U_{\lam}$ in $D_{\ep_2}$. Take a compact set $L\subset D_{\ep_2}$ that contains $( x_0, -\lam^{-2\mu}, 0)$ with small $x_0>0$ and large $\lam$.  If $s>3$ hence  $\max{\{5/s, 4/3\}}<5/3$ the inequalities \eqref{eq:U:futosiki} and \eqref{eq:U:x:0} are not compatible  which proves Theorem \ref{thm:main:bis}.

When $b_2\neq 0$ does not satisfy \eqref{eq:argA} we make a change of local coordinates $(x_0, x_1, x_2)\mapsto (-x_0, x_1, -x_2)$ such that $P_{mod}+\sum_{j=0}^2b_jD_j$ will be
\begin{equation}
\label{eq:sin:eq}
P_{mod}-b_0D_0+b_1D_1-b_2D_2.
\end{equation}
in the new local coordinates. Since the local solvability in $\gamma^{(s)}$ at the origin is invariant under (analytic) change of local coordinates and $-b_2$ obviously satisfies \eqref{eq:argA}, we conclude the same assertion also in this case.

\end{document}